\setlist[enumerate]{parsep=0pt}
\newtheorem{theorem}{Theorem}
\theoremstyle{definition}
\newtheorem{definition}[theorem]{Definition}
\theoremstyle{theorem}
\newtheorem{lemma}[theorem]{Lemma}
\newtheorem{prop}[theorem]{Proposition}
\newtheorem{cor}[theorem]{Corollary}
\newtheorem{remark}[theorem]{Remark}
\crefname{theorem}{Theorem}{Theorems}
\crefname{lemma}{Lemma}{Lemmas}
\crefname{prop}{Proposition}{Propositions}
\crefname{fact}{Fact}{Facts}
\crefname{remark}{Remark}{Remarks}
\crefname{cor}{Corollary}{Corollaries}
\newcommand{\theoremprefix}{}
\let\thetheoremsaved\thetheorem
\renewcommand{\thetheorem}{\theoremprefix\thetheoremsaved}
\patchcmd{\@startsection}{\par}{\renewcommand{\theoremprefix}{\csname the#1\endcsname.}}{}{}
\begin{document}

\clearpage
\pagenumbering{arabic}

\def\tp{\mbox{\rm tp}}
\def\qftp{\mbox{\rm qftp}}
\def\cb{\mbox{\rm cb}}
\def\wcb{\mbox{\rm wcb}}
\def\Diag{\mbox{\rm Diag}}
\def\trdeg{\mbox{\rm trdeg}}
\def\Gal{\mbox{\rm Gal}}
\def\Lin{\mbox{\rm Lin}}

\def\restriction#1#2{\mathchoice
              {\setbox1\hbox{${\displaystyle #1}_{\scriptstyle #2}$}
              \restrictionaux{#1}{#2}}
              {\setbox1\hbox{${\textstyle #1}_{\scriptstyle #2}$}
              \restrictionaux{#1}{#2}}
              {\setbox1\hbox{${\scriptstyle #1}_{\scriptscriptstyle #2}$}
              \restrictionaux{#1}{#2}}
              {\setbox1\hbox{${\scriptscriptstyle #1}_{\scriptscriptstyle #2}$}
              \restrictionaux{#1}{#2}}}
\def\restrictionaux#1#2{{#1\,\smash{\vrule height .8\ht1 depth .85\dp1}}_{\,#2}} 

\newcommand{\forkindep}[1][]{%
  \mathrel{
    \mathop{
      \vcenter{
        \hbox{\oalign{\noalign{\kern-.3ex}\hfil$\vert$\hfil\cr
              \noalign{\kern-.7ex}
              $\smile$\cr\noalign{\kern-.3ex}}}
      }
    }\displaylimits_{#1}
  }
}
\newpage

\begin{center}

\large \MakeUppercase{A note on some examples of NSOP1 theories}

\vspace{5mm}

    \large Yvon \textsc{Bossut}
\end{center}
\vspace{10mm}

\vspace{10pt}
Abstract : We present here some known and some new examples of non-simple NSOP1 theories and some behavior that Kim-forking can exhibit in these theories, in particular that Kim-forking after forcing base monotonicity can or can not satisfy extension (on arbitrary sets).  This study is based on the results of Chernikov, Ramsey, Dobrowolski and Granger.\footnote{Partially supported by ANR-DFG AAPG2019 GeoMod}

\section{{\Large Context and notations :}}

NSOP1 theories have recently been studied as a generalization of simple theories. Kim and Pillay \cite{kim1997simple} have shown that simplicity can be characterized in terms of the existence of an independence notion satisfying some properties, which then implies that this independence relation is forking independence. Chernikov and Ramsey \cite[Theorem 5.8]{chernikov2016model} have shown a similar result for NSOP1 theories and Kim-forking independence, another relation of independence which is defined as 'generic forking independence'. One question we can ask is the relation between these two notions of independence in the context of NSOP1 theories.

\vspace{10pt}
In this note we will present some known examples of NSOP1 theories as well as generalizations of these examples and we will compute, whenever possible, forking independence and Kim-independence in these theories. We will show that the relation between these two notions can vary among non-simple NSOP1 theories, and we will also show that Kim-forking may or may not satisfy strong local character.

\vspace{10pt}
We shall write $\forkindep^{d}$ for dividing independence, $\forkindep^{f}$ for forking independence and $\forkindep^{K}$ for Kim-independence. By algebraic independence we mean $A\forkindep^{a}_{C}B:=acl(AC)\cap acl(BC)=acl(C)$. The main notions we shall be looking at are the two following definitions which were introduced by H. Adler in \cite{adler2009geometric} in his axiomatic approach of independence relations. They were also studied in the NSOP1 context and also more generally by C. D'Elbée in \cite{d2023axiomatic} and \cite{d2022generic}.

\begin{definition} Forcing base monotonicity : Let $\forkindep$ be an independence relation defined on algebraically closed sets $A,B,C$ such that $C\subseteq A,B$. We define $\forkindep^{M}$ as the weakest independence relation that implies $\forkindep$ and satisfies base monotonicity, algebraic closure and normality, meaning : $A\forkindep^{M}_{C}B$ iff $acl(AB')\forkindep^{}_{acl(CB')}B$ for all $B'\subseteq acl(CB)$.
\end{definition}

\begin{definition} Forcing extension : Let $\forkindep$ be an independence relation defined on algebraically closed sets $A,B,C$ such that $C\subseteq A,B$. We define $\forkindep^{*}$ as the weakest independence relation that implies $\forkindep$ and satisfies extension, meaning : $A\forkindep^{*}_{C}B$ iff for all $B'\supseteq B$ there exist $A'\equiv_{B} A$ such that $A\forkindep_{C}B'$.
\end{definition}



This work is dedicated to the study the two sorted theory of bilinear forms. Let $T_{K}$ be the theory of a field, we write $sT^{K}_{\infty}$ the theory of a vector space of infinite dimension over a model of $T_{K}$ with a non-degenerate symmetric bilinear form. We will study these theories and show different results depending on $T_{K}$.

\vspace{10pt}
We define $\mathcal{L}_{0}$ as the language of rings on the field sort $K$, the sum and the vector $0_{V}$ on the vector sort $V$ and the scalar product $\cdot : K\times V \rightarrow V$. We define the theta functions $\theta_{n}: V^{n+1} \rightarrow K^{n} $ for any $n<\omega$ which is defined the following way: It sends any tuple of vectors $\overline{v}=(v_{i})_{i\leq n}$ with $v_{<n}$ linearly independent and $v_{n}$ in the linear span of $v_{<n}$ to the unique tuple of scalars $(\lambda_{i})_{i<n}$ such that $v_{n} = \sum_{i<n} \lambda_{i}\cdot v_{i}$, and any other tuple to $(0_{K},..,0_{K})$. We will work with the language $\mathcal{L}= \mathcal{L}_{0} \cup \lbrace \theta_{n}$ : $n<\omega \rbrace$.

\vspace{10pt}
The theta functions are definable with quantifiers in $\mathcal{L}_{0}$, the point of adding them into the language is to have Quantifier Elimination in $\mathcal{L}$. For two substructures $A$ and $B$ of some model $M$ of $sT^{K}_{\infty}$ we will write $\langle A \rangle$ for the linear span of $V(A)$, and use the concatenation $AB$ to denote the structure generated by $A$ and $B$.

\section{Generic bilinear form on an NSOP$_1$ field}

We generalize the characterization of Kim-independence over models in $sT^{ACF}_{\infty}$ to the case of NSOP$_1$ fields. To have the most generality as possible on the theory of the field we add an additional axiom to $sT^{K}_{\infty}$ so that this theory is NSOP$_1$ if and only if the theory $T_{K}$ of the field is NSOP$_1$, and compute what Kim-forking is over models, and over algebraically closed set in the case when we assume that $sT^{K}_{\infty}$ satisfies existence.

\subsection{NSOP$_1$ and Kim-independence}

Let us consider an NSOP$_1$ theory of fields $T_{K}$ with quantifier elimination in the language $\mathcal{L}_{K}$ and let $sT^{K}_{\infty }$ be the two sorted theory of infinite dimensional vector spaces over a model of $T_{K}$ with a non degenerate symmetric bilinear form (written $[$ $,$ $]$). For a set of parameter $A$ we will write $A^{\perp}$ to mean $\lbrace u$ : $[u,x]=0$ for all $x\in V(A)\rbrace$. We will also assume the following: 

\begin{center}
$(\dagger)$: if $\overline{b}$ is a finite tuple there is $u\in \overline{b}^{\perp}\setminus \lbrace 0 \rbrace$ such that $[u,u]=0$.\end{center}

The condition $(\dagger)$ is a sufficient condition for this theory to have Q.E. $(\dagger)$ is in particular satisfied when every element of the field is a square. Let $\mathbb{M} = (\mathbb{V},\mathbb{K})$ be a monster model of $sT^{K}_{\infty }$.

\begin{lemma} \label{algbili}
    Let $(\lambda_{i})_{i<n}$ be a tuple of scalars, $(b_{i})_{i<n}$ be linearly independent vectors in $\mathbb{M}$, $\alpha \in \mathbb{K}$ and $\overline{e}$ be a finite tuple of vectors. Then there is $x\in \mathbb{V}$ such that $[x,x]=\alpha$, $[x,b_{i}]=\lambda_{i}$ for all $i<n$ and $x \centernot\in \langle \overline{e} \rangle$.
\end{lemma}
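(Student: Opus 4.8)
The plan is to handle the three requirements in turn: first solve the linear conditions $[x,b_{i}]=\lambda_{i}$, then correct the scalar $[x,x]$ by a perturbation supplied by $(\dagger)$, and finally use infinite-dimensionality to move the solution out of $\langle\overline{e}\rangle$. For the first step, the map $\mathbb{V}\to\mathbb{K}^{n}$, $v\mapsto([v,b_{0}],\dots,[v,b_{n-1}])$, is $\mathbb{K}$-linear and surjective: if its image were proper there would be a nonzero tuple $(\mu_{i})_{i<n}$ with $[v,\sum_{i<n}\mu_{i}b_{i}]=0$ for every $v$, so $\sum_{i<n}\mu_{i}b_{i}$ would lie in the radical of the form, which is trivial by non-degeneracy, contradicting linear independence of the $b_{i}$. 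So fix $x_{1}$ with $[x_{1},b_{i}]=\lambda_{i}$ for all $i<n$; the set of all such vectors is the coset $x_{1}+\overline{b}^{\perp}$, and $\overline{b}^{\perp}$ has codimension $n$, hence is infinite-dimensional. In particular we may already arrange $x_{1}\notin\langle\overline{b},\overline{e}\rangle$ by adding to $x_{1}$ an element of $\overline{b}^{\perp}\setminus\langle\overline{b},\overline{e}\rangle$.

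Next I correct the norm. Working inside $\overline{b}^{\perp}$, I would use $(\dagger)$ to produce a nonzero isotropic vector $u$, and then, by non-degeneracy together with a further application of $(\dagger)$, a hyperbolic pair $u,w\in\overline{b}^{\perp}$ with $[u,u]=[w,w]=0$ and $[u,w]=1$; obtaining $w$ isotropic from a vector pairing non-trivially with $u$ is where characteristic $\ne 2$ is used. Then for $x=x_{1}+au+bw$ we still have $[x,b_{i}]=\lambda_{i}$ for all $i<n$, while $[x,x]=[x_{1},x_{1}]+2a[x_{1},u]+2b[x_{1},w]+2ab$; choosing $(a,b)$ suitably — $b=0$ and $a=(\alpha-[x_{1},x_{1}])/(2[x_{1},u])$ when $[x_{1},u]\ne 0$, $a=0$ and $b$ solving the linear equation when $[x_{1},w]\ne 0$, and otherwise any $a\ne 0$ with $b=(\alpha-[x_{1},x_{1}])/(2a)$ — forces $[x,x]=\alpha$ with the pairing conditions preserved.

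It remains to ensure $x\notin\langle\overline{e}\rangle$. Given a vector $x$ meeting the first two conditions, apply $(\dagger)$ to the tuple $(\overline{b},x,\overline{e})$ to get a nonzero isotropic $u'$ orthogonal to $\overline{b}$, to $x$, and to $\overline{e}$; then for every scalar $t$ we have $[x+tu',b_{i}]=\lambda_{i}$ and $[x+tu',x+tu']=[x,x]+2t[x,u']+t^{2}[u',u']=\alpha$. The affine line $\{x+tu':t\in\mathbb{K}\}$ is not contained in $\langle\overline{e}\rangle$: if $u'\notin\langle\overline{e}\rangle$ then at most one value of $t$ lands in $\langle\overline{e}\rangle$, and if $u'\in\langle\overline{e}\rangle$ the line is the translate $x+\langle u'\rangle$ of a subset of $\langle\overline{e}\rangle$, which misses $\langle\overline{e}\rangle$ entirely once $x\notin\langle\overline{e}\rangle$. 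Picking an admissible $t$ finishes the proof.

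The step I expect to cause trouble is not the computation but the degenerate cases hidden in the last two steps: $(\dagger)$ only yields isotropic vectors orthogonal to a prescribed finite tuple, with no control over whether they lie inside its span, so when the form restricted to one of the finite-dimensional subspaces $\langle\overline{b}\rangle$, $\langle\overline{b},x_{1}\rangle$ or $\langle\overline{e}\rangle$ is degenerate, the vectors produced by $(\dagger)$ may fall inside that subspace or pair trivially with $x_{1}$, so the hyperbolic pair above need not exist as stated and the line in the last step may sit inside $\langle\overline{e}\rangle$. I expect this to be dealt with routinely by enlarging the tuple passed to $(\dagger)$ and by passing to the quotient of $\mathbb{V}$ (or of $\overline{b}^{\perp}$) by the finite-dimensional, totally isotropic radical of the relevant subspace, which again is infinite-dimensional, non-degenerate, and satisfies $(\dagger)$, so the argument can be rerun there.
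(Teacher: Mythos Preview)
Your approach is essentially the paper's: establish surjectivity of $v\mapsto([v,b_i])_{i<n}$, then use $(\dagger)$ to produce isotropic vectors in $\overline{b}^\perp$ that let you adjust $[x,x]$, with infinite-dimensionality handling the avoidance of $\langle\overline{e}\rangle$. Your surjectivity argument (a nonzero linear form on the image would force some nontrivial $\sum\mu_ib_i$ into the radical) is slicker than the paper's inductive construction of a dual system. Conversely, the paper's norm correction is a little cleaner than yours: rather than building a full hyperbolic pair $u,w$ and doing a case split on $[x_1,u],[x_1,w]$, the paper takes $x',x''\in(\overline{b},x)^\perp$ with only $[x',x']=0$ and $[x',x'']=1$ required, so that $[x+cx'+x'',\,x+cx'+x'']=[x,x]+[x'',x'']+2c$ and a single linear choice of $c$ finishes (this also needs $2\neq 0$, though the paper does not flag it); choosing $x',x''$ orthogonal to $x$ as well is what removes your cross terms and hence your case analysis. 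The paper also folds the $\langle\overline{e}\rangle$-avoidance into that same step by taking $x',x''\notin\langle\overline{e},x\rangle$, instead of making a separate third pass. The degenerate-case worry in your last paragraph---$(\dagger)$ might return a vector lying in the span of the tuple it was fed---is genuine and affects the paper's argument just as much as yours; the paper simply does not comment on it, and your suggested fix (enlarge the tuple, or pass to a quotient by the finite-dimensional radical) is the right kind of patch.
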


\begin{proof}
We begin by showing that there is $x\in \mathbb{V}$ such that $[x,b_{i}]=\lambda_{i}$ for all $i<n$ and $x \centernot\in \langle \overline{e}\rangle$, for this we do not need the assumption ($\dagger$). What we want to show is that given a linearly independent tuple $\overline{b}=b_{0},..,b_{n}$ the linear function $\psi_{\overline{b}}$: $x\longrightarrow ([x,b_{i}])_{i\leq n}$ is surjective, the other condition follows then from the fact that its kernel has infinite dimension.

\vspace{10pt}
Now let us prove surjectivity by induction on $n<\omega$. For $n=0$ it is trivial, now assume that it holds for $n>0$. We want to find $b'_{0} \in (b_{1},..,b_{n})^{\perp}\setminus b_{0}^{\perp}$. By assumption there is $b'_{i}$ for $1\leq i\leq n$ such that $[b'_{i},b_{j}]=\delta_{i,j}$ for all $1\leq i,j\leq n$. With this every $u\in \mathbb{V}$ can be written:

\begin{center}
$u=\sum\limits_{1\leq i\leq n}[u,b_{i}]\cdot b'_{i} + (u-\sum\limits_{1\leq i\leq n}[u,b_{i}]\cdot b'_{i})$.
\end{center}

So $\mathbb{V} = \bigoplus\limits_{1\leq i\leq n} \mathbb{K}\cdot b'_{i} + (b_{1},..,b_{n})^{\perp}$, we will write $u_{0} = u-\sum\limits_{1\leq i\leq n}[u,b_{i}]\cdot b'_{i}$.

\vspace{10pt}
Now if such a $b'_{0}$ does not exist we would have that $(b_{1},..,b_{n})^{\perp} \subseteq b_{0}^{\perp}$, using the previous decomposition, for all $u\in \mathbb{V}$ we have:

$$[u,b_{0}]=\sum\limits_{1\leq i\leq n}[b'_{i},b_{0}]\cdot [u,b_{i}] + [u_{0},b_{0}]=\sum\limits_{1\leq i\leq n}[b_{i},b_{0}]\cdot [u,b_{i}],$$

which by non-degeneracy contradicts the fact that $b_{0} \centernot\in \langle b_{1},..,b_{n}\rangle$.

\vspace{10pt}

We now use our assumption $(\dagger)$ to choose the value of the quadratic form. Choose $x\in \mathbb{V}$ such that $[x,b_{i}]=\lambda_{i}$ for all $i<n$ and $x \centernot\in \langle \overline{e} \rangle$. By what we have just shown and $(\dagger)$ there are $x',x''$ linearly independent such that $x',x'' \centernot\in \langle \overline{e},x\rangle$, $x',x''\in (\overline{b},x)^{\perp}$, $[x',x']=0$ and $[x',x'']=1$. Then the element $x+(\alpha -[x,x] - [x'',x''])\cdot x' + x''$ corresponds to what we are looking for.

\vspace{10pt}

We now show that when every element of the field is a square the condition $(\dagger)$ is satisfied. Let $\overline{b}$ be a finite tuple of vectors and $x\in \overline{b}^{\perp}\setminus \langle \overline{b} \rangle$. By the previous point we can find $y\in \overline{b}^{\perp}\setminus \langle \overline{b}x \rangle$ such that $[x,y]=0$. Then, if $[x,x]=0$ or $[y,y]=0$ we have the element we are looking for, and else if $\lambda^{2}=[x,x]$ and $\mu^{2}=-[y,y]$ then $\mu \cdot x + \lambda \cdot y$ satisfies our conditions.
\end{proof}

\begin{cor}
    The theory $sT^{K}_{\infty }$ has Q.E. in the language $\mathcal{L}=\mathcal{L}_{K}\cup \lbrace [$ $,$ $],\cdot, + \rbrace \cup \lbrace \theta_{n}$ : $ n<\omega\rbrace$. This implies in particular that the field is stably embedded.
\end{cor}
\begin{proof}
For this we show that finite partial isomorphism inside of $\mathbb{M}$ have the back and forth property. Consider two finitely generated structures $A$ and $B$ in $\mathbb{M}$ such that $\varphi$: $A \rightarrow B$ is an isomorphism. We show that we can extend it to any element $a\in\mathbb{M}$.
    
\vspace{10pt}    
Let $a_{1},..,a_{n-1}$ be a base of $V(A)$ as a $K(A)$ vector space and $b_{i}:=\varphi(a_{i})$, which is then a base of $V(B)$ as a $K(B)$ vector space. Let $\varphi_{K}$ : $K(A)\rightarrow K(B)$ be the induced field isomorphism. By quantifier elimination in the field sort we can extend $\varphi_{K}$ to any element of $\mathbb{K}$, so if $a\in \mathbb{K}$ we can extend $\varphi$ to $Aa$. If $a\in \langle A \rangle$ and $a= \sum\limits_{i} \lambda_{i}\cdot a_{i}$ it is clear that extending $\varphi$ to the structure generated by $Aa$ is equivalent to extending $\varphi$ to $A(\lambda_{i})_{i<n}$, which we can do by quantifier elimination in the field sort.

\vspace{10pt}
Now if $a\centernot\in \langle A \rangle$ let us consider an extension $\varphi_{K}'$ of $\varphi_{K}$ to the field sort of the structure generated by $Aa$ (which is just $K(A)([a,a],[a,a_{i}])_{i<n}$). Now using \cref{algbili} we can find $b\centernot\in \langle B \rangle$ such that $[b,b]=\varphi'_{K}$ and $[b,b_{i}]=\varphi_{K}'([a,a_{i}])$ for all $i<n$. It is clear that sending $a$ to $b$ extends the isomorphism $\varphi$.\end{proof}

Without the assumption $(\dagger)$ quantifier elimination in the theory of infinite dimensional vector spaces over a model of $T_{K}$ with a non degenerate symmetric bilinear form in the language $\mathcal{L}$ depends on the theory of the field (having squares for example) and also on the bilinear form (being positively defined in a RCF for example), the ($\dagger$) assumption allows us to keep full generality on the theory of the field.

\begin{definition}
For $E \subseteq A,B$ algebraically closed subsets of $\mathbb{M}$ and $E\models sT^{K}_{\infty }$, we define the relation $A\forkindep^{\infty}_{E}B$ by: $\langle A \rangle \cap \langle B \rangle = \langle E \rangle$ and $K(A)\forkindep^{K}_{K(E)}K(B)$ in the field theory.
\end{definition}

\begin{prop}
$sT^{K}_{\infty }$ is NSOP$_1$ and $\forkindep^{\infty} = \forkindep^{K}$ over models.
\end{prop}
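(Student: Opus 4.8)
The plan is to check that $\forkindep^{\infty}$ satisfies the criterion of Chernikov and Ramsey characterising NSOP1 via a well-behaved independence relation (\cite[Theorem 5.8]{chernikov2016model}). Extend $\forkindep^{\infty}$ to all small sets by declaring $A\forkindep^{\infty}_{M}B$ iff $acl(MA)\forkindep^{\infty}_{M}acl(MB)$. It then suffices to verify that $\forkindep^{\infty}$ is an invariant ternary relation which, over models of $sT^{K}_{\infty}$, has strong finite character, existence, monotonicity, symmetry and the independence theorem: granting this, $sT^{K}_{\infty}$ is NSOP1 and $\forkindep^{\infty}=\forkindep^{K}$ over models. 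Invariance is immediate. Existence is trivial, the defining conditions being vacuous when the right-hand side equals the base. Monotonicity and symmetry each split into a clause on the vector sort, where they are the evident monotonicity and symmetry of linear disjointness of subspaces, and a clause on the field sort, where they are monotonicity and symmetry of Kim-independence in $T_{K}$ --- valid because $T_{K}$ is NSOP1.

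For strong finite character, suppose $A\centernot\forkindep^{\infty}_{M}B$, with $M$ a model and $A,B$ algebraically closed containing $M$. Either $\langle A\rangle\cap\langle B\rangle\neq\langle M\rangle$, so there is a vector $v$ in this intersection with $v\notin\langle M\rangle$; the existence of such a $v$, together with its coordinates over finitely many elements of $A$ and of $B$, is expressed by a single $\mathcal{L}$-formula, using the coordinate functions $\theta_{n}$ and the quantifier elimination of the corollary above. Or $K(A)\centernot\forkindep^{K}_{K(M)}K(B)$ in $T_{K}$; by strong finite character of Kim-dividing in the NSOP1 theory $T_{K}$, this is witnessed by an $\mathcal{L}_{K}$-formula on finitely many scalars drawn from $K(A)\cup K(B)$, which becomes an $\mathcal{L}$-formula once those scalars are read off with the bilinear form and the $\theta_{n}$. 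In either case every realisation of the formula again fails $\forkindep^{\infty}$-independence from $B$ over $M$.

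The real work is the independence theorem. Fix a model $M$ and algebraically closed sets $b,c,a_{1},a_{2}\supseteq M$ with $b\forkindep^{\infty}_{M}c$, $a_{1}\forkindep^{\infty}_{M}b$, $a_{2}\forkindep^{\infty}_{M}c$ and $a_{1}\equiv_{M}a_{2}$; we must find $a$ with $a\equiv_{Mb}a_{1}$, $a\equiv_{Mc}a_{2}$ and $a\forkindep^{\infty}_{M}bc$. I would first amalgamate in the field sort: the hypotheses yield $K(a_{1})\forkindep^{K}_{K(M)}K(b)$, $K(a_{2})\forkindep^{K}_{K(M)}K(c)$, $K(b)\forkindep^{K}_{K(M)}K(c)$ and $K(a_{1})\equiv_{K(M)}K(a_{2})$; since $K(M)\models T_{K}$ and $T_{K}$ is NSOP1, the independence theorem for $\forkindep^{K}$ in $T_{K}$ produces $\kappa$ with $\kappa\equiv_{K(b)}K(a_{1})$, $\kappa\equiv_{K(c)}K(a_{2})$ and $\kappa\forkindep^{K}_{K(M)}K(b)K(c)$, and by extension for $\forkindep^{K}$ over the model $K(M)$ we may improve this to $\kappa\forkindep^{K}_{K(M)}K(acl(Mbc))$ --- needed because $K(acl(Mbc))$ is generated over $K(M)$ by $K(b)$, $K(c)$ and the extra scalars $[b_{i},c_{j}]$. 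Read through the two isomorphisms (mutually consistent since $a_{1}\equiv_{M}a_{2}$), $\kappa$ prescribes all the Gram data the sought $a$ must carry: the values $[a,a]$, the $[a,m]$ for $m\in M$, and the $[a,b]$ and $[a,c]$. I would then realise the vector part: iterating \cref{algbili} over the finitely many vectors in play, pick a tuple $\tilde{a}$ of vectors in $\mathbb{V}$, linearly independent over $\langle bc\rangle$ and hence over $\langle M\rangle$, whose Gram matrix against $M\cup b\cup c\cup\tilde{a}$ realises exactly the scalars dictated by $\kappa$. By quantifier elimination one then has $\tp(\tilde{a}/Mb)=\tp(a_{1}/Mb)$ and $\tp(\tilde{a}/Mc)=\tp(a_{2}/Mc)$; moreover $\langle M\tilde{a}\rangle\cap\langle Mbc\rangle=\langle M\rangle$ by linear independence and $K(M\tilde{a})\forkindep^{K}_{K(M)}K(Mbc)$ by the field amalgamation, so $\tilde{a}\forkindep^{\infty}_{M}bc$, as wanted.

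The one genuine obstacle I anticipate is the field-sort bookkeeping just described: organising the Gram data imposed on $\tilde{a}$ into a consistent $\mathcal{L}_{K}$-type over $K(acl(Mbc))$ restricting correctly to $K(Mb)$ and to $K(Mc)$ --- which is exactly what the independence theorem in $T_{K}$ provides --- while ensuring that $K(b)\forkindep^{K}_{K(M)}K(c)$ is fed into it with the right base and that independence from $K(b)K(c)$ is upgraded to independence from the larger field $K(acl(Mbc))$ by extension over $K(M)$. The vector-sort half is routine by comparison: \cref{algbili} is tailor-made to place each new vector of $\tilde{a}$ outside a given span with arbitrary prescribed pairings, and linear disjointness over a subspace is a far more tractable notion than Kim-independence.
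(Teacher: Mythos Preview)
Your proposal is correct and follows essentially the same route as the paper: verify the Kim--Pillay--style axioms for $\forkindep^{\infty}$ over models (strong finite character by case-splitting on which clause fails, the independence theorem by first amalgamating the field sorts via the NSOP1 independence theorem in $T_{K}$, upgrading via extension to independence from the full field $K(B_{0}B_{1})$, and then realising the vector part with \cref{algbili} and concluding by quantifier elimination). One small discrepancy worth flagging: the paper invokes Kaplan--Ramsey's criterion rather than Chernikov--Ramsey's, and after deducing NSOP1 and $\forkindep^{\infty}\Rightarrow\forkindep^{K}$ it separately argues the converse $\forkindep^{K}\Rightarrow\forkindep^{\infty}$ by hand (using coheir Morley sequences to see that failure of either clause of $\forkindep^{\infty}$ forces Kim-dividing), whereas you are relying on the cited criterion to deliver full equality directly; make sure the version you quote actually does so, or add the short converse argument as the paper does.
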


\begin{proof}
We will now show that this relation satisfies the conditions of the first part of the Kim-Pillay theorem for Kim-forking over models \cite[Proposition 5.8]{chernikov2016model}.

\vspace{10pt}
Clearly it satisfies symmetry, monotonicity and existence over models. For strong finite character if $A\centernot\forkindep^{\infty}_{E}B$ and $K(A)\centernot\forkindep^{K}_{K(E)}K(B)$ then we can take the formula in $\tp(A/B)$ that Kim-forks over $E$. If $\langle A \rangle \cap \langle B \rangle \centernot= \langle E \rangle$ let $(e_{i})_{i<\alpha}$ be a base of $V(E)$ (it is then also a base of $\langle E \rangle$), $(a_{i})_{i<\beta}$ completing it into a base of $V(A)$ and $(b_{i})_{i<\beta}$ completing it into a base of $V(B)$. Then we have finite tuples $\overline{e}$, $\overline{a}$ and $\overline{b}$ in these bases and an element $a\in \langle \overline{a} \rangle)$ such that $a \in \langle \overline{e}\overline{b}\rangle \setminus \langle \overline{e} \rangle$, and the formula expressing this satisfies the condition.

\vspace{10pt}
We show that $\forkindep^{\infty}$ satisfies the independence theorem over models. Let $E \subseteq A_{0},A_{1},B_{0},B_{1}$ with $E$ a model, $A_{0}\equiv_{E}A_{1}$, $A_{j} \forkindep_{E}^{\infty}B_{j}$ for $j=0,1$ and $B_{0}\forkindep_{E}^{\infty}B_{1}$. Let $(e_{i})_{i<\alpha}$ be a base of $V(E)$, complete it by $(b^{j}_{i})_{i<\beta_{j}}$ into a base of $V(B_{j})$ for $j=0,1$, so $(e_{i})_{i<\alpha}(b^{0}_{i})_{i<\beta_{0}}(b^{1}_{i})_{i<\beta_{1}}$ is a base of $B_{0}B_{1}$ since $B_{0}$ and $B_{1}$ are linearly independent on $E$. 

\vspace{10pt}
Let $(a^{0}_{i})_{i<\gamma}$ complete $(e_{i})_{i<\alpha}$ into a base of $A_{0}$. Let $a_{0}=k_{0}v_{0}$ be a tuple enumerating $A_{0}$, $A_{0}\equiv_{E}A_{1}$ so we can find a tuple $a_{1}=k_{1}v_{1}$ enumerating $A_{1}$ such that $\varphi : a_{0} \longrightarrow a_{1}$ is an $E$-elementary embedding. Define $(a^{1}_{i})_{i<\gamma}:=\varphi((e_{i})_{i<\alpha})$. Since $\varphi$ is elementary over $E$ we have $\varphi([e_{i},a^{0}_{j}])=[e_{i},a^{1}_{j}]$ and $\varphi([a^{0}_{i},a^{0}_{j}])=[a^{1}_{i},a^{1}_{j}]$ for all $i,j$.

\vspace{10pt}
We have that $\tp(k_{0}/K(E))=\tp(k_{1}/K(E))$, $k_{j} \forkindep_{K(E)}^{K}K(B_{j})$ for $j=0,1$ and $K(B_{0})\forkindep_{K(E)}^{K}K(B_{1})$.

\vspace{10pt}
By applying the independence theorem for Kim-forking in $T_{K}$ we find $k\models \tp(k_{0}/K(B_{0})) \cup \tp(k_{1}/K(B_{1}))$ such that $k\forkindep_{K(E)}^{K}K(B_{0})K(B_{1})$. Using extension we can assume that $$k\forkindep_{K(E)}^{K}K(B_{0}B_{1}).$$

We have two partial embeddings $\varphi_{0} : k_{0} \longrightarrow k$ and $\varphi_{1} : k_{1} \longrightarrow k$ which are elementary over $K(B_{0})$ and $K(B_{1})$ respectively, by construction we have that $\varphi_{0} = \varphi_{1} \circ \varphi$. We will now define our structure $A$. Let $(a_{i})_{i<\gamma}$ be a tuple of linearly independent vectors not in $\langle B_{0}B_{1} \rangle$ such that:
    \begin{enumerate}
        \item[] $[e_{i},a_{j}]= \varphi_{0}([e_{i},a^{0}_{j}])=\varphi_{1}([e_{i},a^{1}_{j}])$ for all $i<\alpha,j<\gamma$,
        \item[] $[a_{i},a_{j}]= \varphi_{0}([a^{0}_{i},a^{0}_{j}])=\varphi_{1}([a^{1}_{i},a^{1}_{j}])$ for all $i,j<\gamma$,
       \item[] $[a_{i},b^{0}_{j}]= \varphi_{0}([a^{0}_{i},b^{0}_{j}])$ for all $i<\gamma,j<\beta_{0}$,
       \item[] $[a_{i},b^{1}_{j}]= \varphi_{1}([a^{1}_{i},b^{1}_{j}])$ for all $i<\gamma,j<\beta_{1}$.
    \end{enumerate}
    
This is possible by \cref{algbili} and compactness. Then the structure $A:=(\Lin_{k}((e_{i})_{i<\alpha}(a_{i})_{i<\gamma}),k)$ satisfies $A\forkindep^{\infty}_{E}B_{0}B_{1}$ and $A\equiv_{B_{j}}A_{j}$ for $j=0,1$ by quantifier elimination.

\vspace{10pt}
By \cite[Proposition 5.8]{chernikov2016model} we have that $sT^{K}_{\infty }$ is NSOP$_1$ and that$\forkindep^{\infty} \implies \forkindep^{K}$ over models. For the other implication, it is clear that if $A\forkindep^{K}_{E}B$ then $K(A)\forkindep^{K}_{K(E)}K(B)$: 

\vspace{10pt}
In fact if $A\forkindep^{K}_{E}B$ there is a sequence $(A_{i})_{i<\omega}$ which is coheir Morley over $E$ and $B$ indiscernible. Then the sequence $(K(A_{i}))_{i}$ is coheir Morley over $K(E)$ and $K(B)$ indiscernible, which by Kim's Lemma for Kim-forking in $T_{K}$ implies that $K(A)\forkindep^{K}_{K(E)}K(B)$.

\vspace{10pt}
Now if $\langle A \rangle \cap \langle B\rangle  \centernot= \langle E\rangle$, let $a\in \langle \overline{a}\rangle$ such that $a\in \langle B\rangle \setminus \langle E\rangle $ with $\overline{a}$ a finite tuple in $A$. Let $(B_{i})_{i<\omega}$ be a coheir Morley sequence over $E$ with $B_{0}=B$. We have $\langle B_{i}\rangle \cap \langle B_{<i}\rangle = \langle E\rangle $, so if $q(x,y):=\tp(\overline{a},B)$ then $\bigcup\limits_{i<\omega} q(x,B_{i})$ is inconsistent and $A\centernot\forkindep^{K}_{E}B$. We will give a more general argument when generalizing this to algebraically closed sets in \cref{forkindepouais}.\end{proof}

In particular if $T_{K}$ is a theory of fields, the theory $sT_{\infty}^{K}$ is NSOP$_1$ if and only if the theory $T_{K}$ is, and $sT_{\infty}^{K}$ is also always non-simple, as we can construct infinite decreasing sequences of type definable groups of unbounded index using orthogonality.

\subsection{More properties of independence relations}

We now assume that the theory of the field $T_{K}$ has existence. $\forkindep^{\infty}$ is then defined on arbitrary sets. We show that in this case some properties passes from $\forkindep^{K}$ in $T_{K}$ to $\forkindep^{\infty}$.

\begin{prop} If Kim-forking in $T_{K}$ satisfies strong local character, i.e. if for all finite tuple $\overline{a} \in \mathbb{K}$ and for all $B\subseteq \mathbb{K}$ small set there is a finite $B_{0}$ such that $a\forkindep^{K}_{B_{0}}B$, then $\forkindep^{\infty}$ also does. This is the case when $T_{K}$ is supersimple for example.
\end{prop}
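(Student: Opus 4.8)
The plan is to unfold $A\forkindep^{\infty}_{E}B$ into its two clauses and treat the vector sort and the field sort separately. Write the given finite tuple as $\overline{c}=\overline{\kappa}\,\overline{a}$, where $\overline{\kappa}$ is a tuple of scalars and $\overline{a}=(a_{1},\dots,a_{n})$ a tuple of vectors, and let $B$ be small; we may as well take $B=acl(B)$, so that $K(acl(B))=K(B)$ and every inner product $[b,b']$ with $b,b'\in B$ already lies in $K(B)$. For a finite algebraically closed $B_{0}\subseteq B$, the statement $\overline{c}\forkindep^{\infty}_{B_{0}}B$ means $\langle \overline{c}\,B_{0}\rangle\cap\langle B\rangle=\langle B_{0}\rangle$ together with $K(acl(\overline{c}\,B_{0}))\forkindep^{K}_{K(acl(B_{0}))}K(B)$ in $T_{K}$; since $B_{0}\subseteq B$, the modular law for subspaces turns the first clause into the condition $\langle\overline{a}\rangle\cap\langle B\rangle\subseteq\langle B_{0}\rangle$.

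First I would dispose of the vector clause: the subspace $\langle\overline{a}\rangle\cap\langle B\rangle$ is finite-dimensional over the field of $\mathbb{M}$, so there is a finite tuple $\overline{b}_{1}$ of vectors of $B$ with $\langle\overline{a}\rangle\cap\langle B\rangle\subseteq\langle\overline{b}_{1}\rangle$, and then the vector clause holds for \emph{every} finite $B_{0}$ with $\overline{b}_{1}\subseteq B_{0}\subseteq B$. It remains to choose the scalar part of $B_{0}$. I would fix $B_{0}$ to have vector part exactly $\overline{b}_{1}$; then $V(acl(B_{0}))$ is the $K(acl(B_{0}))$-span of $\overline{b}_{1}$, so each $[a_{i},v]$ with $v\in V(acl(B_{0}))$ is a $K(acl(B_{0}))$-linear combination of the $[a_{i},b_{j}]$. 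Consequently, writing $\overline{d}$ for the finite tuple listing $\overline{\kappa}$, the $[a_{i},a_{j}]$ and the $[a_{i},b_{j}]$ (which depends only on $\overline{c}$ and $\overline{b}_{1}$), and $\overline{q}$ for the finite tuple listing the $[b_{i},b_{j}]$, we get $K(acl(\overline{c}\,B_{0}))=acl_{T_{K}}(K(acl(B_{0}))\cup\overline{d})$ while $K(acl(B_{0}))=acl_{T_{K}}(\overline{q}\cup(\text{scalar part of }B_{0}))$.

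Now I would apply strong local character of $\forkindep^{K}$ in $T_{K}$, in its form localized over the finite parameter set $\overline{q}$, to the finite tuple $\overline{d}$ and the small set $K(B)$: this yields a finite $Z\subseteq K(B)$ with $\overline{d}\forkindep^{K}_{\overline{q}\,Z}K(B)$. Taking the scalar part of $B_{0}$ to be $Z$ (a finite subset of $K(B)$, using $B=acl(B)$) gives $K(acl(B_{0}))=acl_{T_{K}}(\overline{q}\,Z)$ and $K(acl(\overline{c}\,B_{0}))=acl_{T_{K}}(\overline{d}\,\overline{q}\,Z)$, and since enlarging the left-hand side by the base is harmless for $\forkindep^{K}$, we obtain $K(acl(\overline{c}\,B_{0}))\forkindep^{K}_{K(acl(B_{0}))}K(B)$. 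Combined with the first step this gives $\overline{c}\forkindep^{\infty}_{B_{0}}B$ with $B_{0}$ finite, which is what is wanted; when $T_{K}$ is supersimple the same argument runs verbatim with $\forkindep^{K}=\forkindep^{f}$.

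The main obstacle is exactly the step just invoked: the inner products $\overline{q}$ of the vectors $\overline{b}_{1}$ are forced into the base $K(acl(B_{0}))$ the moment $\overline{b}_{1}\subseteq B_{0}$, so what is really needed is that strong local character for $\forkindep^{K}$ persists after naming the finitely many parameters $\overline{q}$, i.e.\ that it holds in $T_{K}$ with finitely many constants added. This is immediate when $T_{K}$ is supersimple, because there $\forkindep^{K}=\forkindep^{f}$ has base monotonicity and one simply enlarges the base; in the general NSOP1 case it has to be argued from the facts that naming finitely many parameters preserves NSOP1 and preserves strong local character of Kim-forking. Beyond that, the remaining ingredients — the description of $acl$ in $sT^{K}_{\infty}$ (no new algebraic vectors, field sort closed under $acl_{T_{K}}$), the modular-law computation, and the standard $acl$- and base-closure properties of $\forkindep^{K}$ used at the end — are routine.
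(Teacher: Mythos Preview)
Your argument is correct and follows the same route as the paper: pick a finite tuple $\overline{b}\subseteq B$ of vectors capturing $\langle\overline{a}\rangle\cap\langle B\rangle$, observe that the field sort of the structure $A_0$ generated by $A\overline{b}$ is finitely generated, and then apply strong local character in $T_K$ to produce the scalar part of $B_0$. The one point where you make life harder than the paper does is your ``main obstacle'': you try to force $\overline{q}=([b_i,b_j])$ into the base up front by invoking strong local character in $T_K$ \emph{with $\overline{q}$ named as constants}, and then worry about whether that property is preserved. This detour is unnecessary. Since $\overline{q}\subseteq K(A_0)$ (the $b_i$ lie in $A_0$) and $\overline{q}\subseteq K(B)$, any finite $E_0\subseteq K(B)$ with $K(A_0)\forkindep^K_{E_0}K(B)$ automatically satisfies $\overline{q}\subseteq acl_{T_K}(E_0)$, because under the standing existence hypothesis on $T_K$ Kim-independence implies algebraic independence; hence $acl(K(B_0))=acl(E_0)$ and the base can be moved from $E_0$ to $K(B_0)$ for free. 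With this observation your obstacle dissolves and no appeal to preservation under adding constants is needed.
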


\begin{proof}
Let $A$ be a finitely generated structure and  let $\overline{a}$ be a basis of $V(A)$ (the tuple $\overline{a}$ is then finite). Let $B\subseteq \mathbb{M}$ be a substructure, with $(b_{i})_{i<\beta}$ a $K(B)$-basis of $V(B)$. Since $\overline{a}$ is a finite tuple we can find a finite $\overline{b}\subseteq \lbrace b_{i}$ : $i<\beta \rbrace$ such that $\langle \overline{a}\rangle \cap \langle (b_{i})_{i<\beta} \rangle \subseteq \langle \overline{b} \rangle$. We then have that $\langle \overline{a}\overline{b} \rangle \cap \langle (b_{i})_{i<\beta} \rangle = \langle \overline{b} \rangle$.

\vspace{10pt}
We now consider the structure $A_{0}$ generated by $A\overline{b}$. We have $\langle A_{0} \rangle = \langle \overline{a}\overline{b}\rangle$, and the field sort is finitely generated, since $K(A)$ is finitely generated and by adding $\overline{b}$ we only have to add a finite number of images by the bilinear form and coefficients from linearly dependent tuples.

\vspace{10pt}
By assumption there is a finitely generated $E_{0}\subseteq K(B)$ such that $K(A_{0})\forkindep^{K}_{E_{0}}B$, and by setting $B_{0}$ to be the substructure of $B$ generated by $(\overline{b},E_{0})$ we get that $A\forkindep^{\infty}_{B_{0}}B$.
\end{proof}

\begin{lemma} \label{lindisj} $A\forkindep^{d}_{E}B$ implies that $\langle A \rangle \cap \langle B \rangle = \langle E \rangle$ for all $E\subseteq A,B$.
\end{lemma}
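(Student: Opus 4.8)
The plan is to prove the contrapositive: if $\langle A\rangle\cap\langle B\rangle\neq\langle E\rangle$ then $\tp(A/B)$ divides over $E$, so $A\centernot\forkindep^{d}_{E}B$. First I would fix a vector $v\in(\langle A\rangle\cap\langle B\rangle)\setminus\langle E\rangle$ and, since $v$ is a finite linear combination of vectors from each of $A$ and $B$, choose finite tuples $\overline{a}\subseteq V(A)$, $\overline{b}\subseteq V(B)$ with $v\in\langle\overline{a}\rangle\cap\langle\overline{b}\rangle$; then, as $\langle\overline{b}\rangle\cap\langle E\rangle$ is finite-dimensional and lies in $\langle E\rangle$, choose a finite tuple $\overline{e}\subseteq V(E)$ with $\langle\overline{b}\rangle\cap\langle E\rangle\subseteq\langle\overline{e}\rangle$, so that $v\notin\langle\overline{e}\rangle$. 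The dividing formula will be $\varphi(\overline{x};\overline{b},\overline{e})$: ``there are scalars $\overline{\lambda},\overline{\mu}$ with $\sum_{k}\lambda_{k}x_{k}=\sum_{j}\mu_{j}b_{j}$ and $\sum_{k}\lambda_{k}x_{k}\notin\langle\overline{e}\rangle$'', i.e.\ that $\langle\overline{x}\rangle$ meets $\langle\overline{b}\rangle$ outside $\langle\overline{e}\rangle$. It is witnessed by $v$, hence lies in $\tp(\overline{a}/B)\subseteq\tp(A/B)$, and its parameters $\overline{b},\overline{e}$ are in $B$.

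Next I would take $(\overline{b}_{i})_{i<\omega}$ to be a Morley sequence over $E$ generated by a global type extending $\tp(\overline{b}/E)$ that is finitely satisfiable in $E$; this sequence is $E$-indiscernible and $\overline{b}_{0}\equiv_{E}\overline{b}$. Writing $V_{i}=\langle\overline{b}_{i}\rangle$, the two facts I would establish are: (i) $V_{i}\cap\langle E\rangle\subseteq\langle\overline{e}\rangle$ for every $i$, which follows from $E$-indiscernibility, since for any witness $u\in V_{i}\cap\langle E\rangle$ the statement ``$\langle\overline{b}\rangle\cap\langle\overline{e}'\rangle\subseteq\langle\overline{e}\rangle$'' (a universal sentence about scalars, for a suitable finite $\overline{e}'\subseteq V(E)$ with $\overline{e}'\supseteq\overline{e}$ and $u\in\langle\overline{e}'\rangle$) is a formula over $E$ true of $\overline{b}$, hence of $\overline{b}_{i}$; and (ii) $V_{i}\cap\langle\overline{b}_{<i},E\rangle\subseteq\langle E\rangle$ for every $i$, which is where finite satisfiability in $E$ is used: if $u$ were in the left-hand side but outside $\langle E\rangle$, I would pick a finite $\overline{e}'\subseteq V(E)$ with $\overline{e}'\supseteq\overline{e}$, $u\in\langle\overline{b}_{<i},\overline{e}'\rangle$ and $\langle\overline{b}_{<i}\rangle\cap\langle E\rangle\subseteq\langle\overline{e}'\rangle$ (possible since the relevant spans meeting $\langle E\rangle$ are finite-dimensional), observe that the formula over $E\overline{b}_{<i}$ saying ``$\langle\overline{b}_{i}\rangle$ meets $\langle\overline{b}_{<i},\overline{e}'\rangle$ outside $\langle\overline{e}'\rangle$'' holds of $\overline{b}_{i}$, pull it down to a witness $\overline{e}^{*}$ from $V(E)$, and derive a contradiction: the witnessing vector lies in $\langle E\rangle\cap\langle\overline{b}_{<i},\overline{e}'\rangle$, and splitting it as $z+w$ with $z\in\langle\overline{b}_{<i}\rangle$, $w\in\langle\overline{e}'\rangle$ forces $z\in\langle E\rangle\cap\langle\overline{b}_{<i}\rangle\subseteq\langle\overline{e}'\rangle$, so the vector is in $\langle\overline{e}'\rangle$ after all. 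Combining (i) and (ii) yields $V_{i}\cap\langle\overline{b}_{<i}\rangle\subseteq\langle\overline{e}\rangle$ for all $i$.

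Finally I would run a dimension count. Set $m=|\overline{a}|$. If some $\overline{a}^{*}$ realized $\varphi(\overline{x};\overline{b}_{i},\overline{e})$ for all $i\le m$, then for each such $i$ there is $w_{i}\in\langle\overline{a}^{*}\rangle\cap V_{i}$ with $w_{i}\notin\langle\overline{e}\rangle$; in any nontrivial relation $\sum_{i\le m}\gamma_{i}w_{i}=0$ the top nonzero term would satisfy $\gamma_{i^{*}}w_{i^{*}}\in V_{i^{*}}\cap\langle\overline{b}_{<i^{*}}\rangle\subseteq\langle\overline{e}\rangle$, forcing $w_{i^{*}}\in\langle\overline{e}\rangle$, a contradiction; so $w_{0},\dots,w_{m}$ would be $m+1$ linearly independent vectors inside the at most $m$-dimensional space $\langle\overline{a}^{*}\rangle$, which is impossible. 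Hence $\{\varphi(\overline{x};\overline{b}_{i},\overline{e}):i<\omega\}$ is $(m+1)$-inconsistent, and after pushing $\overline{b}_{0}$ to $\overline{b}$ by an $E$-automorphism (which fixes $\overline{e}$) we conclude that $\varphi(\overline{x};\overline{b},\overline{e})\in\tp(A/B)$ divides over $E$.

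I expect the main obstacle to be fact (ii): showing that a coheir Morley sequence over $E$ is ``linearly disjoint over $\langle E\rangle$'' when $E$ is only a set, not a model. This is the analogue for arbitrary $E$ of the coheir-sequence argument used in the proof of the proposition giving $\forkindep^{\infty}=\forkindep^{K}$ over models, and the only real subtlety is choosing the finite tuples $\overline{e}'\subseteq V(E)$ correctly so that the span-membership assertions become first-order formulas over a set in which the relevant global type is finitely satisfiable.
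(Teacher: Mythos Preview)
Your dimension-count endgame and the choice of the dividing formula are fine, and your argument for (ii) is correct \emph{granted} the existence of the coheir sequence. The gap is earlier: a global type extending $\tp(\overline{b}/E)$ that is finitely satisfiable in $E$ need not exist when $E$ is an arbitrary set rather than a model. Already the sort formula ``$x$ is a vector'' forces $V(E)\neq\emptyset$, and more seriously, if $V(E)$ spans a space of dimension $<|\overline{b}|$ (or if some $b_j\notin\langle E\rangle$ while every vector of $E$ lies in a fixed finite-dimensional subspace), then the formula ``$\overline{x}$ is linearly independent over $\langle\overline{e}\rangle$'' in $\tp(\overline{b}/E)$ has no witness from $E$, and no coheir exists. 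Since your proof of (ii) genuinely uses finite satisfiability (``pull it down to a witness $\overline{e}^{*}$ from $V(E)$''), this is not a cosmetic issue.

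The paper sidesteps this entirely: rather than invoke any invariant global type, it builds the $E$-indiscernible sequence by hand. Fixing bases $(e_i)_{i<\alpha}$ for $V(E)$ and $(b_i)_{i<\beta}$ completing it to $V(B)$, \cref{algbili} and compactness produce linearly independent copies $(b_i^j)_{i<\beta}$ over $\langle E\rangle$ with $[b_i^j,e_l]=[b_i,e_l]$, $[b_i^j,b_l^j]=[b_i,b_l]$, and $[b_i^j,b_l^{j'}]=0$ for $j\neq j'$; quantifier elimination then gives $E$-indiscernibility, and the construction forces $\langle B_j\rangle\cap\langle B_{<j}\rangle=\langle E\rangle$ directly. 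From there the dividing argument is the same as yours (indeed shorter, since linear disjointness is built in rather than argued via (i)--(ii)). The moral: in this theory the right way to produce indiscernible sequences over arbitrary bases is to write them down explicitly using \cref{algbili}, not to reach for coheirs.
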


\begin{proof}
We can assume that $A$ is a finite tuple. As previously let $(e_{i})_{i<\alpha}$ be a basis of $V(E)$ and let $(e_{i})_{i<\alpha}(b_{i})_{i<\beta}$ a basis of $V(B)$.

\vspace{10pt}
We consider a tuple $(b^{j}_{i})_{i<\beta,j<\omega}$ linearly independent over $E$ such that:

\begin{enumerate}
    \item $[b_{i}^{j},e_{l}]=[b_{i},e_{l}]$ for all $i<\beta,j<\omega$,
    \item $[b_{i}^{j},b^{j}_{l}] = [b_{i},b_{l}]$ for all $i,l<\beta,j<\omega$,
    \item $[b_{i}^{j},b^{j'}_{l}] = 0$ for all $i,l<\beta$ and $j\centernot= j'<\omega$.
\end{enumerate}

We can find such a sequence by \cref{algbili} and compactness. By quantifier elimination it is an indiscernible sequence over $E$, so $\langle A \rangle \cap \langle B \rangle \subseteq \langle E \rangle$: Else the type of $A$ over $B_{0}$ would divides along that sequence since $\langle B_{i}\rangle \cap \langle B_{<i} \rangle = \langle E \rangle$ for all $i<\omega$.
\end{proof}

\begin{prop} Let $A$ be a structure such that $\forkindep^{K}$ in $T_{K}$ satisfies the independence theorem over $K(A)$. If $D^{0},D^{1}$ and $B^{0},B^{1}$ are such that $B^{0}\forkindep^{\infty}_{A}B^{1}$, $D^{0}\equiv_{A}D^{1}$ and $D^{j}\forkindep^{\infty}_{A}B^{j}$ for $j=0,1$ then there is $D'$ such that $D'\equiv_{AB^{j}}D^{j}$ for $j=0,1$ and $D'\forkindep^{\infty}_{A}B^{0}B^{1}$.
\end{prop}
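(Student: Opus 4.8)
The plan is to rerun the amalgamation argument from the proof that $sT^{K}_{\infty}$ is NSOP1, now with the algebraically closed set $A$ in the role of the base model and with our hypothesis --- the independence theorem for $\forkindep^{K}$ over $K(A)$ in $T_{K}$ --- in place of the appeal there to the independence theorem for Kim-forking over models of $T_{K}$. Since $A$ is algebraically closed in $\mathbb{M}$ and the field is stably embedded by the corollary above, $K(A)$ is algebraically closed in $T_{K}$, so the hypothesis is not vacuous; the only things that have to be re-examined are the linear-algebra bookkeeping and the use of Q.E.

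First I would fix a base $(e_{i})_{i<\alpha}$ of $V(A)$ and complete it to bases $(e_{i})_{i<\alpha}(b^{j}_{i})_{i<\beta_{j}}$ of $V(B^{j})$ for $j=0,1$; since $B^{0}\forkindep^{\infty}_{A}B^{1}$ we have $\langle B^{0}\rangle\cap\langle B^{1}\rangle=\langle A\rangle$, so $(e_{i})_{i<\alpha}(b^{0}_{i})_{i<\beta_{0}}(b^{1}_{i})_{i<\beta_{1}}$ is a base of $\langle B^{0}B^{1}\rangle$. I complete $(e_{i})_{i<\alpha}$ to a base $(e_{i})_{i<\alpha}(d^{0}_{i})_{i<\gamma}$ of $V(D^{0})$ appearing inside an enumeration $d_{0}=k_{0}v_{0}$ of $D^{0}$, fix an $A$-elementary bijection $\varphi:d_{0}\to d_{1}$ onto an enumeration $d_{1}=k_{1}v_{1}$ of $D^{1}$ coming from $D^{0}\equiv_{A}D^{1}$, and set $(d^{1}_{i})_{i<\gamma}=\varphi((d^{0}_{i})_{i<\gamma})$, which is a base of $V(D^{1})$ over $\langle A\rangle$; thus $\varphi$ records $[e_{i},d^{1}_{j}]=\varphi([e_{i},d^{0}_{j}])$ and $[d^{1}_{i},d^{1}_{j}]=\varphi([d^{0}_{i},d^{0}_{j}])$ for all $i,j<\gamma$.

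Next I would pass to the field sort. From $D^{j}\forkindep^{\infty}_{A}B^{j}$ and monotonicity, $k_{j}\forkindep^{K}_{K(A)}K(B^{j})$; from $B^{0}\forkindep^{\infty}_{A}B^{1}$, $K(B^{0})\forkindep^{K}_{K(A)}K(B^{1})$; and $\tp(k_{0}/K(A))=\tp(k_{1}/K(A))$. Applying the hypothesis gives $k\models\tp(k_{0}/K(B^{0}))\cup\tp(k_{1}/K(B^{1}))$ with $k\forkindep^{K}_{K(A)}K(B^{0})K(B^{1})$, and by extension, then passing to the algebraic closure, I may assume $k\forkindep^{K}_{K(A)}K(B^{0}B^{1})$ and $k=acl_{T_{K}}(k)$. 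This produces partial elementary maps $\varphi_{0}:K(D^{0})\to k$ over $K(B^{0})$ and $\varphi_{1}:K(D^{1})\to k$ over $K(B^{1})$ which are coherent in the sense that $\varphi_{0}=\varphi_{1}\circ\varphi$ on $K(D^{0})$; I then extend each $\varphi_{j}$ to a partial elementary map over $K(B^{j})$ also defined on the cross-values $[d^{j}_{i},b^{j}_{l}]$.

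Finally I would build $D'$: by \cref{algbili} and compactness choose linearly independent vectors $(a_{i})_{i<\gamma}$ lying outside $\langle B^{0}B^{1}\rangle$ with $[e_{i},a_{j}]=\varphi_{0}([e_{i},d^{0}_{j}])$, $[a_{i},a_{j}]=\varphi_{0}([d^{0}_{i},d^{0}_{j}])$, $[a_{i},b^{0}_{j}]=\varphi_{0}([d^{0}_{i},b^{0}_{j}])$ and $[a_{i},b^{1}_{j}]=\varphi_{1}([d^{1}_{i},b^{1}_{j}])$ for the relevant indices; consistency of this system is exactly where one uses that $k$ was taken with $k\forkindep^{K}_{K(A)}K(B^{0}B^{1})$ and that $\varphi_{0},\varphi_{1}$ agree over $K(A)$. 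Put $D'=(Lin_{k}((e_{i})_{i<\alpha}(a_{i})_{i<\gamma}),k)$; this is algebraically closed since $k$ is, $K(D')=k$, and $\langle D'\rangle\cap\langle B^{0}B^{1}\rangle=\langle A\rangle$ by the choice of the $a_{i}$, so $D'\forkindep^{\infty}_{A}B^{0}B^{1}$. For each $j$, the map which is $\varphi_{j}$ on the field sort of the structure generated by $D^{j}B^{j}$, the identity on $B^{j}$, and $d^{j}_{i}\mapsto a_{i}$ on the remaining vectors preserves all values of the bilinear form and of the $\theta_{n}$, hence by Q.E. is an $\mathcal{L}$-isomorphism onto the structure generated by $D'B^{j}$ fixing $B^{j}$ pointwise, so $D'\equiv_{AB^{j}}D^{j}$. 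I expect the hard part to be this last step --- verifying that the prescribed system is consistent and that the glued map is $\mathcal{L}$-elementary; once the independence theorem over $K(A)$ has supplied a sufficiently generic $k$, this is the same Q.E. verification as in the NSOP1 proof, the only additional point being that linear independence of the $a_{i}$ over $\langle B^{0}B^{1}\rangle$ imposes no further vector-sort relations.
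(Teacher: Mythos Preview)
Your proposal is correct and follows exactly the approach the paper intends: the paper's own proof is the single line ``The proof is strictly similar to the one of amalgamation over models,'' and you have faithfully rerun that argument with $A$ in place of the base model and the hypothesised independence theorem over $K(A)$ in place of the one over models. One small misattribution: the consistency of the prescribed bilinear-form values for the $(a_{i})$ comes purely from \cref{algbili} and compactness and does not use $k\forkindep^{K}_{K(A)}K(B^{0}B^{1})$; that independence is only needed afterwards to conclude $D'\forkindep^{\infty}_{A}B^{0}B^{1}$.
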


\begin{proof} The proof is strictly similar to the one of amalgamation over models.
\end{proof}

We will now additionally assume that the theory $sT_{\infty}^{K}$ has existence, which was only proven for $ACF$. This assumption allows us to use Kim-forking independence over arbitrary sets and its properties, here we use Kim's lemma for Kim-forking.

\begin{prop}\label{forkindepouais} If $E\subseteq A,B$ are algebraically closed, then $A\forkindep^{K}_{E}B$ if and only if $A\forkindep^{\infty}_{E}B$.
\end{prop}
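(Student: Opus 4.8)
The plan is to prove the two implications separately: $\forkindep^{\infty} \Rightarrow \forkindep^{K}$ via the canonicity criterion of \cite[Theorem 9.1]{Kaplan2017OnK}, and the converse by a direct dividing argument. Since we are assuming that $sT^{K}_{\infty}$ has existence, both this criterion and Kim's lemma for Kim-forking (\cite[Theorem 3.5]{dobrowolski2020sets}) are available over arbitrary algebraically closed sets and not only over models, so the argument is essentially the proof given above that $sT^{K}_{\infty}$ is NSOP1, with ``model'' replaced throughout by ``algebraically closed set''.

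For $\forkindep^{\infty} \Rightarrow \forkindep^{K}$ I would check that, over algebraically closed bases, $\forkindep^{\infty}$ satisfies symmetry, monotonicity and existence — the latter by using \cref{algbili} and compactness to make the vector part linearly disjoint and existence in $T_{K}$ for the field part — together with strong finite character (the witnessing formula being, exactly as before, either a formula of the field sort or one expressing $a \in \langle \overline{e}\,\overline{b}\rangle \setminus \langle \overline{e}\rangle$) and the independence theorem over algebraically closed sets. This last point is precisely the preceding Proposition; its hypothesis, that Kim-forking in $T_{K}$ has the independence theorem over $K(A)$, holds because $T_{K}$ has existence — so Kim-independence in $T_{K}$ has the independence theorem over all algebraically closed sets — and because $K(A)$ is algebraically closed in the field sort whenever $A$ is algebraically closed, the field being stably embedded by the Corollary above. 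The criterion of \cite[Theorem 9.1]{Kaplan2017OnK}, in the form valid over algebraically closed sets under existence, then yields $\forkindep^{\infty} \Rightarrow \forkindep^{K}$.

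For the converse I would argue by contraposition and split according to how $\forkindep^{\infty}$ fails. If $K(A) \centernot\forkindep^{K}_{K(E)} K(B)$, pick a formula of $\tp(K(A)/K(B))$ Kim-dividing over $K(E)$; by stable embeddedness a witnessing Morley sequence of $K(B)$ over $K(E)$ in $T_{K}$ lifts to a Morley sequence $(B_{i})_{i<\omega}$ over $E$ in $sT^{K}_{\infty}$ with $B_{0}=B$ along which this formula is inconsistent, so $\tp(A/B)$ Kim-divides over $E$. If instead $\langle A\rangle \cap \langle B\rangle \neq \langle E\rangle$, choose $a$ in this intersection but not in $\langle E\rangle$, with $a$ in the span of a finite tuple of $A$; using a coheir (or $\forkindep$-)Morley sequence of $K(B)$ over $K(E)$ together with \cref{algbili} and compactness to keep the new vectors linearly disjoint along the sequence — the device already used in the proof of \cref{lindisj} — I would build a Morley sequence $(B_{i})_{i<\omega}$ over $E$ with $B_{0}=B$ and $\langle B_{i}\rangle \cap \langle B_{<i}\rangle = \langle E\rangle$ for all $i$. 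Then $\bigcup_{i<\omega} p(x,B_{i})$, with $p(x,y) := \tp(A/B)$, is inconsistent, since it would force $a \in \bigcap_{i<\omega}\langle B_{i}\rangle = \langle E\rangle$. In both cases $A \centernot\forkindep^{K}_{E} B$, which is what we wanted.

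The step I expect to be the main obstacle is the second case of the converse: producing an honest witness for Kim-dividing over the algebraically closed — but not necessarily model — base $E$. Over a model one would simply take a coheir Morley sequence, whereas over $E$ one must use the existence hypothesis and the form of Kim's lemma for Kim-forking over arbitrary sets of \cite{dobrowolski2020sets}, and then verify that the linear-disjointness construction of \cref{algbili} can be run while keeping the sequence a legitimate Morley sequence over $E$ — in particular the field sort now varies along the sequence, unlike in \cref{lindisj}. Matching the field-sort Morley sequence with the full one through stable embeddedness is the other delicate point.
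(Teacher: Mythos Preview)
Your two directions both diverge from the paper, and the divergence is instructive.

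For $\forkindep^{\infty}\Rightarrow\forkindep^{K}$ the paper does \emph{not} invoke an abstract canonicity criterion. It argues directly via Kim's lemma: given an arbitrary $E$-Morley sequence $(B_i)$, the field parts $(K(B_i))$ form a $K(E)$-Morley sequence, so the chain condition in $T_K$ produces a realisation $k'$ of $\bigcup_i \tp(K(A)/K(B_i))$; one then uses \cref{algbili} and compactness to find vectors $(a'_j)$ outside $\langle B_{<\omega}\rangle$ with the prescribed values of the form, and Q.E.\ gives $A'B_i\equiv_E AB$. Your route via a version of the Kaplan--Ramsey criterion over algebraically closed sets is viable (such a statement is in \cite{dobrowolski2020sets}, not in \cite{Kaplan2017OnK} as written), but it costs you the verification of all the axioms, whereas the paper's hands-on argument only needs the chain condition in the field and one application of \cref{algbili}.

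For $\forkindep^{K}\Rightarrow\forkindep^{\infty}$ your ``main obstacle'' disappears once you use \cref{lindisj} as a \emph{statement} rather than as a construction. Under the existence hypothesis an $E$-Morley sequence $(B_i)$ with $B_0=B$ exists; since $B_i\forkindep^{f}_{E}B_{<i}$ implies $B_i\forkindep^{d}_{E}B_{<i}$, \cref{lindisj} gives $\langle B_i\rangle\cap\langle B_{<i}\rangle=\langle E\rangle$ automatically for \emph{every} such sequence, and inconsistency of $\tp(A/B)$ along it is immediate. There is no need to manufacture a Morley sequence by lifting one from the field and adjoining vectors via \cref{algbili}; indeed your construction would still owe a verification that the result is genuinely $\forkindep^{f}$-independent over $E$, which is exactly the delicate point you flag. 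Similarly, for the field case the paper restricts rather than lifts: any $E$-Morley sequence of $B$ restricts to a $K(E)$-Morley sequence of $K(B)$ by stable embeddedness, so a Kim-dividing formula in the field sort already Kim-divides in the full structure.
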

\begin{proof}
$[\Rightarrow] $: If $\langle A \rangle \cap \langle B \rangle \centernot= \langle E \rangle$, let $B_{i}$ be a Morley sequence with $B_{0}=B$. By \cref{lindisj} we have that $\langle B_{i} \rangle \cap \langle B_{<i}\rangle = \langle E \rangle$ for all $i<\omega$, so the type of $A$ over $B$ is inconsistent along that sequence, and so it Kim-divides over $E$. Now if $K(A)\centernot\forkindep^{K}_{K(E)}K(B)$, the induced structure in the field sort is the one of $T_{K}$ since the field is stably embedded, so $A\centernot\forkindep^{K}_{E}B$.

\vspace{10pt}

$[\Leftarrow]$: Let $A\forkindep^{\infty}_{E}B$, we show that $A\forkindep^{K}_{E}B$ using Kim's Lemma for Kim-forking.

\vspace{10pt}
Let $(e_{i})_{i<\kappa_{E}}$ be a basis of $V(E)$, complete it by $(a_{i})_{i<\kappa_{A}}$ into a basis of $V(A)$ and by $(b^{0}_{i})_{i<\kappa_{B}}$ into a basis of $V(B)$. Let $(B_{i})_{i<\omega}$ be an $E$-Morley sequence with $B_{0}=B$. Let $(b^{j}_{i})_{i<\kappa_{B}}$ be the basis of $B_{i}$ over $V(E)$ corresponding to $(b^{0}_{i})_{i<\kappa_{B}}$ for all $j<\omega$. 

\vspace{10pt}
Then the sequence $(K(B_{j}))_{j<\omega}$ is Morley over $K(E)$ and $K(A)\forkindep^{K}_{K(E)}K(B_{0})$. Let $k_{a}$ be a tuple enumerating $K(A)$ and let $q(x,y):=\tp(k_{a}K(B_{0}))$. Using the chain condition for Kim-forking we can find $k'\models \bigcup\limits_{j<\omega}q(x,K(B_{i}))$.

\vspace{10pt}
As for amalgamation over models there is a partial elementary embedding $\varphi$: $k_{a} \longrightarrow k'$ over $E$ such that for all $j<\omega$ this embedding extends into $\varphi_{i}$: $k_{a}K(B) \longrightarrow k'K(B_{i})$, which is also elementary over $K(E)$.

\vspace{10pt}
Let us now define a structure $A'$. Using \cref{algbili} we can find a tuple $(a'_{i})_{i<\gamma}$ of linearly independent vectors outside of $\langle B_{<\omega} \rangle$ such that:
\begin{enumerate}
        \item[] $[e_{i},a'_{j}]= \varphi_{0}([e_{i},a^{0}_{j}])=\varphi_{l}([e_{i},a_{j}])$ for all $i<\kappa_{E},j<\kappa_{A}$ and $l<\gamma$,
        \item[] $[a'_{i},a'_{j}]= \varphi_{0}([a_{i},a_{j}])=\varphi_{l}([a_{i},a_{j}])$ for all $i,j<\kappa_{A}$ and $l<\gamma$,
       \item[] $[a'_{i},b^{l}_{j}]= \varphi_{l}([a_{i},b_{j}])$ for all $i<\kappa_{A},j<\kappa_{B}$ and $l<\gamma$.
\end{enumerate}
    
By quantifier elimination the structure $A'=(\Lin_{k'}((e_{i})_{i<\alpha}(a'_{i})_{i<\gamma}),k')$ is such that $A'B_{j}\equiv_{E}AB$ for all $j<\omega$, so $A\forkindep^{K}_{E}B$ by Kim's Lemma for Kim-independence.
\end{proof}

\section{Proving existence in the case of simple fields}

We will now assume that $T_{K}$ is a simple theory of fields (see \cite{wagner2000simple} for properties of simple fields). We will generalize the notion of Gamma-forking that Granger developed in \cite{granger1999stability} for the case of $ACF$ and also give complete proofs. We use the same terminology as previously, and to lighten the notations we will write $A$ for the structure generated by the set $A$. As previously we work in a monster model of $sT^{K}_{\infty}$. We begin by recalling the properties of $f$-generics in groups definable in simple theories.

\begin{definition}
Let $(G,\cdot)$ be a group definable over $E$ in a model $M$ of some complete theory. For $E\subseteq B$ a set of parameters we say that an element $a\in G$ is \emph{f-generic in $G$ over $B$} if $a\cdot b\forkindep_{B}^{f}b$ for every $b\in G$ such that $a\forkindep^{f}_{B}b$.
\end{definition}

\begin{prop}\cite[Proposition 4.1.7]{wagner2000simple} Let $(G,\cdot)$ be a group definable over $E$ in a model $M$ of some simple theory and $B\supseteq E$. Then there is a generic type for $G$ over $B$.
\end{prop}

\begin{prop}\cite[Proposition 4.1.7]{wagner2000simple} Let $(G,\cdot)$ be a group definable over $E$ in a model $M$ of some simple theory and $B\supseteq E$. Then there is a generic type for $G$ over $B$.
\end{prop}

\begin{lemma}\cite[Lemma 4.1.2]{wagner2000simple}\label{fgener} Let $g$ be $f$-generic for $G$ over $A$ and let $A\subseteq B$. If $g\forkindep_{A}^{f}B$ then $g$ is $f$-generic over $B$.
\end{lemma}

We say that a tuple $(x_{i})_{i<\kappa} \in \mathbb{K}$ is $f$-generic over $K\subseteq \mathbb{K}$ if $x_{i}$ is $f$-generic over $\emptyset$ for all $i<n$ and $x_{i}\forkindep^{f}_{\emptyset}Kx_{\centernot =i}$. By \cref{fgener} this implies that $x_{i}$ is $f$-generic over $Kx_{\centernot= i}$.

\vspace{10pt}

We recall that in a simple field the notion of genericity for the additive group and the multiplicative group coincide. These notions coincide with genericity for affine transformations: if $x$ is generic over $K \subseteq \mathbb{K}$, $g\in \mathbb{K}^{*}$ and $h\in \mathbb{K}$ such that $x\forkindep^{f}_{K}g,h$ then $g\cdot x + h\forkindep^{f}_{K}g,h$.

\begin{lemma}\label{algindep2} $f$-generic tuples in $\mathbb{K}$ satisfy the following properties:
\begin{enumerate}
    \item Extension: If a tuple $(x_{i})_{i<\kappa}$ is $f$-generic over $K$ and if $(x_{i})_{i<n}\forkindep^{f}_{K}K'$ for $K\subseteq K'$ then $(x_{i})_{i<\kappa}$ is $f$-generic over $K'$.
    \item Genericity: If a finite tuple $(x_{i})_{i<n}$ is $f$-generic over $K$, $M\in GL_{n}(K)$ is an invertible matrix and $(k_{i})_{i<n} \in K$ then the tuple $M \times (x_{i})_{i<n} + (k_{i})_{i<n}$ is also $f$-generic over $K$.
    \item Transitivity: If $\overline{x}$ and $\overline{y}$ are finite tuples, $\overline{x}$ is $f$-generic over $K$ and $\overline{y}$ is $f$-generic over $K(\overline{x})$ if and only if $\overline{x}\overline{y}$ is $f$-generic over $K$.
\end{enumerate}\end{lemma}

\begin{proof}
The first and last points are just transitivity of $\forkindep^{f}$ and \cref{fgener}. For the second point we know that in a simple field genericity for the additive group and the multiplicative group are the same.

\vspace{10pt}
This tells us that if $(\alpha_{i})\in K^{*}$ and $(k_{i})_{i<n}\in K$ the tuple $(\alpha_{i}\cdot x_{i}+k_{i})_{i<n}$ is $f$-generic over $K$. The tuple $(x_{1}+x_{2},x_{2},..,x_{n-1})_{i<n}$ is $f$-generic over $K$ since $x_{1}$ is generic over $Kx_{\centernot =1}$ and $x_{2}$ is generic over $Kx_{\centernot =2}$, and it is clear that with these two transformations and the permutations of the tuple $(x_{i})_{i<n}$ we can generate the transformations $(x_{i})_{i<n} \rightarrow M \times (x_{i})_{i<n} + (k_{i})_{i<n}$ described in the statement.
\end{proof}

\begin{definition}
    Let $E\subseteq A,B \subseteq \mathbb{M}$. We say that $A$ is Gamma-independent of $B$ over $E$, written $A\forkindep^{\Gamma}_{E}B$ if the following conditions holds:
    \begin{enumerate}
        \item  $K(A)\forkindep^{f}_{K(E)}K(B)$.
        \item $\langle A \rangle \cap \langle B \rangle = \langle E \rangle$.
        \item For all $b_{1},..,b_{n} \in V(B)$ linearly independent over $E$ and $a_{1},..,a_{m} \in V(A)$ linearly independent over $E$ the tuple $([a_{i},b_{j}])_{i\leq m,j\leq n}$ is $f$-generic over $K(A)K(B)$.
    \end{enumerate}
\end{definition}

\begin{remark}\label{suffisantgamma}
Using \cref{algindep2} it is easy to show that when $A$ and $B$ have finite dimension over $E$ it is enough to check $3.$ for some basis $b_{1},..,b_{n}$, $a_{1},..,a_{m}$ of $B$ and $A$ over $E$. In fact changing the basis of $A$ over $E$ consists of applying a transformation of the form $(x_{i})_{i<n} \rightarrow M \times (x_{i})_{i<n} + (k_{i})_{i<n}$ with $M\in GL_{n}(K(A))$ and $k_{i} \in \Lin_{K(A)}(V(E))$.

\vspace{10pt}
A similar result holds when the dimension is infinite: If for some basis $(a_{i})_{i\leq \kappa_{A}}$ of $A$ over $E$ and $(b_{i})_{i\leq \kappa_{B}}$ of $B$ over $E$ the tuple $([a_{i},b_{j}])_{i<\kappa_{A},j<\kappa_{B}}$ is $f$-generic over $K(A)K(B)$ then for any basis $(a'_{i})_{i\leq \kappa_{A}}$ of $A$ over $E$ and $(b'_{i})_{i\leq \kappa_{B}}$ of $B$ over $E$ the tuple $([a'_{i},b'_{j}])_{i<\kappa_{A},j<\kappa_{B}}$ is $f$-generic over $K(A)K(B)$.
\end{remark}

\begin{proof} Let $(a_{i})_{i\leq \kappa_{A}}$, $(a'_{i})_{i\leq \kappa_{A}}$, $(b_{i})_{i\leq \kappa_{B}}$ be as in the statement. We begin by showing that any finite subtuple of $([a'_{i},b_{j}])_{i<\kappa_{A},j<\kappa_{B}}$ is $f$-generic over $K(A)K(B)$.

\vspace{10pt}
Consider $(a'_{j'_{k}})_{k<n'}$ for $n'<\omega$, $(j'_{k})_{k<n'}\in \kappa_{A}$ a finite part of $(a'_{i})_{i\leq \kappa_{A}}$. We can find some $n<\omega$ and $(j_{k})_{k<n}\in \kappa_{A}$ such that:

$$\langle E (a'_{j'_{k}})_{k<n'} (a_{j_{k}})_{k<n}\rangle = \langle E (a_{j_{k}})_{k<n}\rangle, $$

and that $(a'_{j'_{k}})_{k<n'}(a_{j_{k}})_{n'\leq k<n}$ is a basis of $\langle E (a_{j_{k}})_{k<n}\rangle $ over $E$. 

\vspace{10pt}
By the finite dimensional case we know that the tuple $([a'_{j'_{k}},b_{l_{k}}])_{k<n',l<m}$ is $f$-generic over $K(A)K(B)$ for any $m<\omega$ and $(l_{k})_{k<m} < \kappa_{B}$. Since this holds for any finite parts of $(a'_{i})_{i\leq \kappa_{A}}$ and $(b_{i})_{i\leq \kappa_{B}}$ by finite character of $\forkindep^{f}$ we get that the tuple $([a'_{i},b_{j}])_{i<\kappa_{A},j<\kappa_{B}}$ is $f$-generic over $K(A)K(B)$.
\end{proof}

\begin{prop}\label{proprietes2} The relation of Gamma-independence is invariant, transitive, symmetric, satisfies existence, monotonicity, finite character and extension.
\end{prop}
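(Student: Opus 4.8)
The plan is to verify each of the listed properties in turn, reducing every statement to the corresponding property of forking in the simple field theory $T_{K}$ together with the additive/multiplicative genericity facts collected in \cref{algindep2}. Invariance is immediate since the definition only refers to $\forkindep^{f}$ in the field, to linear spans, and to the bilinear form, all of which are preserved by automorphisms of $\mathbb{M}$. Finite character and monotonicity: condition 3 is phrased with a universal quantifier over finite linearly independent tuples $b_{1},\dots,b_{n}\in V(B)\setminus\langle A\rangle$, so monotonicity in $B$ is built in, and the field-sort clause $K(Ad)\forkindep^{f}_{K(A)}K(B)$ has monotonicity and finite character since $T_{K}$ is simple; the inductive clause for finite tuples $\overline d d'$ then inherits these by a routine induction. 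For the one-variable base case, finite character also uses that genericity of $([d,b_i])_{i\leq n}$ over $K(Ad)K(B)$ can be witnessed by looking at finite subsets of $B$.

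Existence: given $d$ and $A\subseteq B$, I want $d'\equiv_A d$ with $d'\forkindep^{\Gamma}_A B$. First extend in the field sort (using existence/extension for $\forkindep^{f}$ in $T_{K}$) to get the right type of the field part over $K(B)$; then, if $d\notin\langle A\rangle$, use \cref{algbili} and compactness to realise a vector $d'$ outside $\langle B\rangle$ with the prescribed values $[d',a_i]$ matching $\tp(d/A)$ and with $([d',b_i])_i$ chosen generic over $K(Ad')K(B)$ — genericity of a tuple with prescribed "diagonal" values is exactly what \cref{algbili} plus \cref{algindep2}(2) deliver, after an affine change of variables. The inductive clause for tuples is handled by iterating. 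Symmetry: this is the delicate point. For the field-sort clause symmetry is just symmetry of $\forkindep^{f}$ in $T_{K}$. For condition 3 one has to show that "$d\notin\langle B\rangle$ and $([d,b_i])_i$ generic over $K(Ad)K(B)$ for all l.i. $\overline b$ from $V(B)\setminus\langle A\rangle$" is symmetric in the roles of (the structure generated by) $Ad$ and $B$; here one writes $[d,b_i]$ as a single bilinear pairing, notes that a basis of $V(B)$ over $V(A)$ paired against $d$ gives the same data as a basis on the other side paired against the $b$'s, and uses \cref{algindep2}(2) to pass between the two coordinate systems. Combined with the field-sort symmetry and an induction on the length of the tuple (using transitivity, established next), this yields symmetry of $\forkindep^{\Gamma}$.

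Transitivity and extension: transitivity of the field clause is transitivity of $\forkindep^{f}$; transitivity of condition 3 across a tower $A\subseteq B\subseteq C$ is exactly \cref{algindep2}(3) (transitivity of genericity of the pairing tuples), after splitting a basis of $V(C)$ over $V(A)$ into one part over $V(B)$ and a complement and checking the genericity statements match up; the inductive clause for tuples then propagates transitivity by the usual bookkeeping. Extension follows formally from existence plus transitivity and invariance in the standard way: to extend $d\forkindep^{\Gamma}_A B$ to some larger $B'\supseteq B$, apply existence over $B$ to find $d'\equiv_B d$ with $d'\forkindep^{\Gamma}_B B'$, then use transitivity to combine with $d\forkindep^{\Gamma}_A B$ and conclude $d'\forkindep^{\Gamma}_A B'$. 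The main obstacle is the symmetry argument for clause 3: one must carefully reconcile the two one-sided genericity conditions on the bilinear pairing, and this is where the affine-genericity closure properties of \cref{algindep2}(2) — genericity being preserved under $GL_n$ over the base plus translations — do the real work, since changing which side we read the pairing from is precisely such a linear change of coordinates.
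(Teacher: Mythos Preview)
Your proposal is essentially correct and follows the same skeleton as the paper: handle the field-sort clause by simplicity of $T_K$, handle the vector-sort genericity clause via \cref{algindep2}, establish the single-element base cases, and bootstrap to tuples via the inductive definition together with transitivity. One organisational difference worth noting: you prove a ``full existence'' statement (for any $d$ and $A\subseteq B$, find $d'\equiv_A d$ with $d'\forkindep^{\Gamma}_A B$) and then derive extension formally from existence, transitivity and invariance, whereas the paper proves extension directly by the same \cref{algbili}-and-field-extension construction; both routes are valid and amount to the same work. Your transitivity sketch, however, skips the case analysis that the paper carries out in detail --- in particular the degenerate cases $d\in\langle B\rangle\setminus\langle A\rangle$ and $d\in\langle C\rangle\setminus\langle B\rangle$, where one simply observes that both sides of the transitivity equivalence fail, and the careful bookkeeping needed to pass from genericity of $([d,b_i])_i$ over $K(Ad)K(B)$ and of $([d,c_j])_j$ over $K(Bd)K(C)$ to genericity of the concatenated tuple over $K(Ad)K(C)$ (this is where the identity $K(Bd)=K(Ad)K(B)([d,b_i])_i$ and \cref{algindep2}(3) are actually used). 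Similarly, your symmetry paragraph blurs the single-element base case with the general one; the paper makes explicit that symmetry is first checked for a single $d$ against a single $b$, and only then lifted to tuples via the already-established transitivity. None of this is a genuine gap, but these are precisely the places where the argument requires care.
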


\begin{proof} It is clear that $\forkindep^{\Gamma}$ satisfies invariance, monotonicity, existence and symmetry. We begin by showing that $\forkindep^{\Gamma}$ satisfies finite character.

\vspace{10pt}
Assume that for every finitely generated structures over $E$ $A_{0}\subseteq A$ and $B_{0}\subseteq B$ we have $A_{0}\forkindep^{\Gamma}_{E}B_{0}$. Then clearly $\langle A \rangle \cap \langle B \rangle = \langle E \rangle$. $K(A)\forkindep^{f}_{K(E)}K(B)$ follows from the finite character of $\forkindep^{f}$. The other condition of $\forkindep^{\Gamma}$ follows from the same fact: 

\vspace{10pt}
If $b_{1},..,b_{n} \in V(B)$ are linearly independent over $E$ and $a_{1},..,a_{m} \in V(A)$ are linearly independent over $E$ then by assumption the tuple $([a_{i},b_{j}])_{i\leq m,j\leq n}$ is $f$-generic over $K(A_{0})K(B_{0})$ for any $A_{0}\subseteq A$ and $B_{0}\subseteq B$ finitely generated structures over $E$ containing $(a_{i})_{i\leq m}$ and $(b_{j})_{j\leq n}$ respectively. In particular for any finitely generated subfield $K\subseteq K(A)K(B)$ the tuple $([a_{i},b_{j}])_{i\leq m,j\leq n}$ is $f$-generic over $K$, so it is $f$-generic over $K(A)K(B)$ and $A\forkindep^{\Gamma}_{E}B$.

\vspace{10pt}
We now prove the transitivity of $\forkindep^{\Gamma}$. Let $E\subseteq A,B$ and $B\subseteq C$. By finite character we can assume that $A$ is finitely generated over $E$ and that $C$ is finitely generated over $B$. Let $(e_{i})_{i<\kappa_{E}}$ be a basis of $V(E)$, we complete it by $(a_{i})_{i<n}$ and $(b_{i})_{i<\kappa_{B}}$ into some basis of $V(A)$ and $V(B)$ respectively, and complete $(e_{i})_{i<\kappa_{E}}(b_{i})_{i<\kappa_{B}}$ by $(c_{i})_{i<m}$ into a basis of $V(C)$. We assume that $A\forkindep^{\Gamma}_{E}C$ and show that $AB\forkindep^{\Gamma}_{B}C$. Clearly $\langle AB \rangle \cap \langle C \rangle  = \langle B \rangle$.

\vspace{10pt}
By assumption $K(C)\forkindep^{f}_{K(E)}K(A)K(B)$, so by base monotonicity $K(C)\forkindep^{f}_{K(B)}K(A)K(B)$. $K(AB)=K(A)K(B)[a_{i},b_{j}]_{i<n,j<\kappa_{B}}$, and by assumption $[a_{i},b_{j_{k}}]_{i<n,k<m}\forkindep^{K}_{\emptyset}K(A)K(C)$ for any $m<\omega$ and $(j_{k})_{k<m} \in \kappa_{B}$ so, by finite character, $[a_{i},b_{j}]_{i<n,j<\kappa_{B}}\forkindep^{K}_{\emptyset}K(A)K(C)$.

Then using base monotonicity we get: 

$$K(A)K(B)[a_{i},b_{j}]_{i<\kappa_{A},j<\kappa_{B}}\forkindep^{K}_{K(A)K(B)}K(A)K(C),$$

and using transitivity we get that $K(AB)\forkindep^{f}_{K(B)}K(C)$. 

\vspace{10pt}
By assumption the tuples $[a_{i},c_{j}]_{i<n,j<m}[a_{i},b_{j_{k}}]_{i<n,k<m'}$ for any $m<\omega$ and $(j_{k})_{k<m} \in \kappa_{B}$ are $f$-generic over $K(A)K(C)$. By $3.$ of \cref{algindep2} this implies that the tuple $[a_{i},c_{j}]_{i<n,j<m}$ is $f$-generic over $K(A)K(C)[a_{i},b_{j_{k}}]_{i<n,k<m'}$, and finite character entails that $[a_{i},c_{j}]_{i<n,j<m}$ is $f$-generic over $K(AB)K(C)$, so $AB\forkindep^{\Gamma}_{B}C$.

\vspace{10pt}
We now assume that $A\forkindep^{\Gamma}_{E}B$ and $AB\forkindep^{\Gamma}_{B}C$ and we show that $A\forkindep^{\Gamma}_{E}C$. Clearly $\langle A \rangle \cap \langle C \rangle = \langle E \rangle$.

\vspace{10pt}
By assumption $K(AB)\forkindep^{f}_{K(B)}K(C)$ and $K(A)\forkindep^{f}_{K(E)}K(B)$, so $K(A)\forkindep^{f}_{K(E)}K(C)$. By assumption also the tuple $[a_{i},c_{j}]_{i<n,j<m}$ is $f$-generic over $K(AB)K(C)=K(A)K(B)[a_{i},b_{j}]_{i<n,j<\kappa_{B}}K(C)$ and $[a_{i},b_{j}]_{i<n,j<\kappa_{B}}$ is $f$-generic over $K(A)K(B)$, so by \cref{algindep2} the tuple $[a_{i},b_{j}]_{i<n,j<\kappa_{B}}[a_{i},c_{j}]_{i<n,j<m}$ is $f$-generic over $K(A)K(C)$ and $A\forkindep^{\Gamma}_{E}C$.

\vspace{10pt}
We now show that $\forkindep^{\Gamma}$ satisfies extension. Let $E\subseteq A,B$, let $(e_{i})_{i<\kappa_{E}}$ be a basis of $V(E)$, we complete it by $(a_{i})_{i<\kappa_{A}}$ and $(b_{i})_{i<\kappa_{B}}$ into some basis of $V(A)$ and $V(B)$ respectively. Let $\overline{a}_{k}$ be a tuple enumerating $K(A)$. Let $\overline{a}'_{k}\equiv_{K(E)}\overline{a}_{k}$ be such that $\overline{a}'_{k}\forkindep^{f}_{K(E)}K(B)$. We write $\alpha_{i,j}$ for the element of $\overline{a}'_{k}$ corresponding to $[a_{i},a_{j}]\in \overline{a}_{k}$ via this isomorphism, similarly for $\gamma_{i,j}$ and $[a_{i},e_{j}]\in \overline{a}_{k}$.

\vspace{10pt}
Let $(\beta_{i,j})_{i<\kappa_{A},j<\kappa_{B}}$ be an $f$-generic tuple over $\overline{a}'_{k}K(B)$. At this point we have collected all of the elements of the field sort that we want, and we just need to find the vector sort. By \cref{algbili} we can find some vectors $(a'_{i})_{i<\kappa_{A}}$ linearly independent over $\langle B \rangle$ such that:

\begin{enumerate}
    \item[$\cdot$] $[a'_{i},e_{j}]=\gamma_{i,j}$ for all $i<\kappa_{A}$, $j<\kappa_{E}$.
    \item[$\cdot$] $[a'_{i},a'_{j}]=\alpha_{i,j}$ for all $i,j<\kappa_{A}$.
    \item[$\cdot$] $[a'_{i},b_{j}]=\beta_{i,j}$ for all $i<\kappa_{A}$, $j<\kappa_{B}$.
\end{enumerate}

Let $A'$ be the structure defined the following way: $K(A')=\overline{a}'_{k}$ and $(a'_{i})_{i<\kappa_{A}}$ is a basis of $V(A')$ over $V(E)$. Then $A'\equiv_{E}A$ by quantifier elimination (sending $a_{i}$ to $a'_{i}$ and $K(A)$ to $K(A')$ via the isomorphism $\overline{a}_{k}\rightarrow \overline{a}'_{k}$ in the field sort gives us the isomorphism we want). $K(A')\forkindep_{K(E)}K_{B}$ and $\langle A \rangle \cap \langle B \rangle = \langle E \rangle$. The last condition of $\forkindep^{\Gamma}$ follows from \cref{suffisantgamma}.\end{proof}

\begin{prop} Let $E$ be a structure and $p\in S(K(E))$ be a type in the field sort that is an amalgamation basis in $\mathbb{K}$. If $A^{0},A^{1}$ and $B^{0},B^{1}$ are structures containing $E$ such that $\models p(K(A^{0}))$, $A_{0}\equiv_{E}A_{1}$, $A^{i}\forkindep^{\Gamma}_{E}B^{i}$ for $i=0,1$ and $B^{0}\forkindep^{\infty}_{A}B^{1}$. Then there is $A$ such that $A\equiv_{B^{j}}A^{j}$ for $j=0,1$ and $A\forkindep^{\Gamma}_{E}B^{0}B^{1}$. So the relation of Gamma-independence satisfies type amalgamation over models.
\end{prop}

\begin{proof} Let $E$ and $A^{j},B^{j}$ for $j=0,1$ be as in the statement. Let $(e_{i})_{i<\kappa_{E}}$ be a basis of $V(E)$, we complete it by $(a^{0}_{i})_{i<\kappa_{A}}$ and $(b^{j}_{i})_{i<\kappa_{B^{i}}}$ into some basis of $V(A^{0})$ and $V(B^{j})$ for $j=0,1$ respectively. Let $(a^{1}_{i})_{i<\kappa_{A}}$ be the basis of $A^{1}$ over $E$ corresponding to $(a^{0}_{i})_{i<\kappa_{A}}$ trough the isomorphism $A_{0}\equiv_{E}A_{1}$. Let $k_{A}^{0}$ enumerate $K(A^{0})$ and let $k_{A}^{1}$ be the corresponding enumeration of $K(A^{1})$. Since the isomorphism is an isomorphism of $\mathcal{L}$-structures it sends $[a^{0}_{j},a^{0}_{j}]$ to $[a^{1}_{j},a^{1}_{j}]$ for any $i,j<\kappa_{A}$ and similarly for $[a^{0}_{j},e_{j}]$ and $[a^{1}_{j},e_{j}]$.

\vspace{10pt}
Then $k_{A}^{0}\equiv_{K(E)}k_{A}^{1}$, $k_{A}^{j}\forkindep^{f}_{K(E)}K(B^{j})$ for $j=0,1$ and $K(B^{0})\forkindep^{f}_{K(E)}K(B^{1})$. By type amalgamation in $T_{K}$ there is $k\models \tp(k_{A}^{0}/K(B^{0}))\cup \tp(k_{A}^{1}/K(B^{1}))$ such that $k\forkindep^{f}_{K(E)}K(B^{0})K(B_{1})$. By extension we can assume that $k\forkindep^{f}_{K(E)}K(B^{0}B^{1})$.

\vspace{10pt}
By assumption the tuple $([a^{0}_{i},b^{0}_{l}])_{i<\kappa_{A},j<\kappa_{B^{0}}}$ is generic over $K(A^{0})K(B^{0})$. By extension there is $(\beta^{0}_{i,j})_{i<\kappa_{A},j<\kappa_{B^{0}}}$ such that $k(\beta^{0}_{i,j})_{i<\kappa_{A},j<\kappa_{B^{0}}} \equiv_{K(B^{0})}K(A^{0})([a^{0}_{i},b^{0}_{l}])_{i<\kappa_{A},j<\kappa_{B^{0}}}$ and $(\beta^{0}_{i,j})_{i<\kappa_{A},j<\kappa_{B^{0}}}$ is generic over $k'K(B^{0}B^{1})$.

\vspace{10pt}
By the same argument there is a tuple $(\beta^{1}_{i,j})_{i<\kappa_{A},j<\kappa_{B^{1}}}$ such that: 

$$k(\beta^{1}_{i,j})_{i<\kappa_{A},j<\kappa_{B^{1}}} \equiv_{K(B^{1})}K(A^{1})([a^{1}_{i},b^{1}_{l}])_{i<\kappa_{A},j<\kappa_{B^{0}}},$$

and $(\beta^{1}_{i,j})_{i<\kappa_{A},j<\kappa_{B^{1}}}$ is $f$-generic over $k'K(B^{0}B^{1})(\beta^{0}_{i,j})_{i<\kappa_{A},j<\kappa_{B^{0}}}$. Then by \cref{algindep2} the tuple $(\beta^{0}_{i,j})_{i<\kappa_{A},j<\kappa_{B^{0}}}(\beta^{1}_{i,j})_{i<\kappa_{A},j<\kappa_{B^{1}}}$ is generic over  $kK(B^{0}B^{1})$. Let $\gamma_{i,j}$ be the element of $k$ corresponding to $[a^{0}_{i},e_{j}]$ for every $i<\kappa_{A},j<\kappa_{E}$ and $\alpha_{i,j}$ the one corresponding to $[a^{0}_{i},a^{0}_{j}]$ for every $i<\kappa_{A},j<\kappa_{A}$ trough the isomorphism $k\equiv_{K(B^{0})}k_{A}^{0}$ (notice that using $A^{1}$ instead gives the same elements of $k$).

\vspace{10pt}
By \cref{algbili} we can find some vectors $(a_{i})_{i<\kappa_{A}}$ linearly independent over $\langle B^{0}B^{1} \rangle$ such that:

\begin{enumerate}
    \item[$\cdot$] $[a_{i},e_{j}]=\gamma_{i,j}$ for all $i<\kappa_{A}$, $j<\kappa_{E}$.
    \item[$\cdot$] $[a_{i},a_{j}]=\alpha_{i,j}$ for all $i,j<\kappa_{A}$.
    \item[$\cdot$] $[a_{i},b^{0}_{j}]=\beta^{0}_{i,j}$ for all $i\kappa_{A}$, $j<\kappa_{B^{0}}$.
    \item[$\cdot$] $[a_{i},b^{1}_{j}]=\beta^{1}_{i,j}$ for all $i\kappa_{A}$, $j<\kappa_{B^{1}}$.
\end{enumerate}

Let $A$ be the structure defined by $K(A)=k$ and $(a_{i})_{i<\kappa_{A}}$ is a basis of $V(A)$ over $V(E)$. Then $A\equiv_{B^{i}}A^{i}$ for $i=0,1$ by quantifier elimination (sending $a_{j}$ to $a^{i}_{j}$ and $K(AB^{i})$ to $K(A^{i}B^{i})$ via the isomorphism $k(\beta^{i}_{j,k})_{j<\kappa_{A},k<\kappa_{B^{i}p}} \equiv_{K(B)} k_{A}^{i}([a^{i}_{j},b^{i}_{l}])_{j<\kappa_{A},l<\kappa_{B^{i}}}$ in the field sort gives us the isomorphism we want). $K(A)\forkindep_{K(E)}K_{B}$ and $\langle A \rangle \cap \langle B \rangle = \langle E \rangle$. The fact that $A\forkindep^{\Gamma}_{E}B^{0}B^{1}$ follows from \cref{suffisantgamma}.\end{proof}

\begin{prop}\label{preuveexist} $A\forkindep^{\Gamma}_{E}B$ implies $D\forkindep^{f}_{A}B$ for all $E \subseteq A,B$. So the theory $sT^{K}_{\infty}$ satisfies existence.
\end{prop}

\begin{proof} By finite character it is enough to prove this for $V(A)$ and $V(B)$ of finite dimension over $V(E)$. Since $\forkindep^{\Gamma}$ satisfies extension it is enough to show that $A\forkindep^{\Gamma}_{E}B$ implies $D\forkindep^{d}_{A}B$. Let $(e_{i})_{i<\kappa_{E}}$ be a basis of $V(E)$, we complete it by $(a_{i})_{i<n}$ into a basis of $V(A)$. Let $(B_{i})_{i<\omega}$ be an $E$-indiscernible sequence with $B_{0}=B$. We want to find an $A'$ such that $A'B_{i}\equiv_{E}AB_{0}$ for all $i<\omega$.

\vspace{10pt}
We find a negative part and extend the sequence to a sequence $(B_{i})_{i\in \mathbb{Z}}$ indexed on the relative integers. By extension of $\forkindep^{\Gamma}$ we can assume that $A\forkindep^{\Gamma}_{E}B_{\mathbb{Z}}$, where $B_{\mathbb{Z}}$ is the structure generated by $\lbrace B_{i}$ : $i\in \mathbb{Z}\rbrace$. We will write $B_{[i,j]}$ for the structure generated by $B_{i}..B_{j}$ for every $i<j\in \mathbb{Z}$. $A\forkindep^{\Gamma}_{EB_{<0}}B_{\mathbb{Z}}$ by base monotonicity of $\forkindep^{\Gamma}$. We define $E':=B_{<0}$, by indiscernibility the $B_{i}$ for $i\in \omega$ are linearly disjoint over $E'$ and they form an $E'$ indiscernible sequence. In fact if some vectors $b_{i}\in B_{i}$ is in $\langle B_{<i}\rangle$ then by indiscernibility it is in $\langle B_{<0}\rangle$.

\vspace{10pt}
Thanks to this we can assume without loss of generality (eventually replacing $E$ by $E'$) that the sequence $(B_{i})_{i<\omega}$ is linearly disjoint over $E$. What happens here is that the elements of $(B_{i})_{i<\omega}$ might overlap outside of the original $\langle E \rangle$ but since the property of being linearly disjoint has local character, by indiscernibility, all the overlapping that might happen is already happening inside of the negative part of the sequence. Let $(b^{0}_{i})_{i<n}$ be a basis of $V(B^{0})$ over $V(E)$, and let $(b^{i}_{i})_{j<n}$ be the corresponding basis of $V(B^{i})$. 

\vspace{10pt}
Let $k$ be a tuple enumerating $K(A)$ and $p(x,y,z)=\tp(k,([a_{i},b_{j}^{0}])_{i<n,j<m},K(B_{0})/K(E))$. By assumption $k([a,b_{j}^{0}])_{j<n}\forkindep^{f}_{K(E)}K(B_{0})$, so there is a tuple $k',(\beta_{i,j})_{i<n,j<m}$ such that $$k',(\beta_{i,j})_{i<n,j<m}\models \bigcup\limits_{i<\omega}p(x,y,K(B_{i})).$$

Write $\gamma_{i,j}$ the element of $k'$ corresponding to $[a_{i},e_{j}] \in k$ for all $i<n$, $j<\kappa_{E}$ and $\alpha_{i,j}$ the element of $k'$ corresponding to $[a_{i},a_{j}] \in k$ for all $i,j<n$. By \cref{algbili} we can find some vectors $(a'_{i})_{i<n}$ linearly independent over $\langle B_{<\omega} \rangle$ such that:

\begin{enumerate}
    \item[$\cdot$] $[a'_{i},e_{j}]=\gamma_{i,j}$ for all $i<\kappa_{A}$, $j<\kappa_{E}$.
    \item[$\cdot$] $[a'_{i},a'_{j}]=\alpha_{i,j}$ for all $i,j<\kappa_{A}$.
    \item[$\cdot$] $[a'_{i},b^{j}_{l}]=\beta_{i,j}$ for all $i<n$, $j<\omega$ and $l<m$.
\end{enumerate}

Let $A'$ be the structure defined by $K(A')=k'$ and $V(A')=\Lin_{k'}((a'_{i})_{i<n})$. By quantifier elimination $A'B_{i}\equiv_{E}AB_{0}$ for all $i<\omega$, which concludes our proof.\end{proof}

\section{Forcing Base Monotonicity of Kim-Independence}

In Chapter 2 we gave some example of NSOP$_1$ theories such that $\forkindep^{K^{M}}=\forkindep^{f}$. It was conjectured that this holds in any NSOP$_1$ theory. A counter-example was given in \cite{bossut2023note} in $sT^{ACF}_{\infty}$.

\vspace{10pt}
This counter-example is partial in the following sense: $sT^{ACF}_{\infty}$ does not eliminate imaginaries, and in the imaginary extension $sT^{ACF,eq}_{\infty}$ this example is not a counter-example anymore. 

\vspace{10pt}
We begin by presenting the said counter example and then we fill this gap by showing that $\forkindep^{K^{M}}\centernot= \forkindep^{f}$ in $sT^{ACF,eq}_{\infty}$. For this we use a result of weak elimination of imaginaries of Dobrowolski to characterize $sT^{ACF,eq}_{\infty}$. We will write $\forkindep^{ACF}$ for forking independence in $ACF$.

\subsection{A counter-example for the home sorts}



Consider $E \models sT^{ACF}_{\infty}$ a small model and let $(e_{i})_{i<\omega}$ be a $K(E)$-basis of $V(E)$. Let $a,b_{0},b_{1}$ be vectors linearly independent over $\langle E\rangle$ such that $a,b_{0},b_{1} \in E^{\bot}$, $[a,a]=[b_{0},b_{0}]=[b_{1},b_{1}]=0$ and $[a,b_{0}]=[a,b_{1}]=[b_{0},b_{1}]=1$.

\vspace{10pt}
Let $A=(\Lin_{K(E)}(V(E),a),K(E))$ and $B=(\Lin_{K(E)}(V(E),b_{0},b_{1}),K(E))$. Then $B$ is a model of $sT_{\infty}^{ACF}$ and $K(AB)=K(E)$ where $K(AB)$ is the field sort of the structure generated by $A$ and $B$. Both $A$ and $B$ are algebraically closed and contain $E$, and by definition we have $A \forkindep^{K}_{E}B$. If $B_{0}$ is such that $E \subseteq B_{0} \subseteq B$ then $B_{0}$ can be $E$ and $B$, and in these two cases $AB_{0}\forkindep^{K}_{B_{0}}B$ holds trivially.

\vspace{10pt}
Otherwise $B_{0}=Eb$ is generated over $E$ by a single vector $b\in B$. In that case this vector $b$ can be written as $b= \lambda_{0}\cdot b_{0} + \lambda_{1}\cdot b_{1}$ for $\lambda_{0},\lambda_{1}\in K(E)$. Then $AB_{0}=(\Lin_{K(E)}(V(E),a,b),K(E))$ and $AB_{0}\forkindep^{K}_{B_{0}}B$. In all of these cases $AB_{0}\forkindep^{K}_{B_{0}}B$, so $A\forkindep^{K^{M}}_{E}B$.

\vspace{10pt}
Now let us consider some $\alpha \in \mathbb{K}$ transcendental over $K(E)$ and $B':=(\Lin_{K(E)[\alpha]}(V(E),b_{0},b_{1}),K(E)[\alpha])$. We show that there is no $A'\equiv_{B} A$ such that $A'\forkindep^{K^{M}}_{E}B'$, which implies that $\forkindep^{K^{M}}$ does not satisfy extension.

\vspace{10pt}
If there were such an $A'$, setting $B_{0}'=(\Lin_{K(E)}(V(E),\alpha \cdot b_{0},\alpha^{-1} \cdot b_{1}),K(E))$ we would have $A'B_{0}'\forkindep^{K}_{B_{0}'}B'$, so $[a',\alpha \cdot b_{0}]\forkindep^{f}_{K(E)}\alpha$ and $\alpha \forkindep^{f}_{K(E)}\alpha$ so $\alpha\in K(E)$, a contradiction. This shows us that in an NSOP$_1$ theory with existence $\forkindep^{K^{M}}$ is not necessary equal to $\forkindep^{f}$ over models.

\vspace{10pt}
As we mentioned $sT^{ACF}_{\infty}$ does not weakly eliminate imaginaries: In fact the equivalence relation between $n$-tuples of vector defined by $E(\overline{x},\overline{y})$ if and only if $\overline{x}$ and $\overline{y}$ generate the same vector space is not eliminated. We will write $\langle \overline{u}\rangle$ for the $E$-class of $\overline{u}$. If we consider Kim-independence in the imaginary extension then $A\forkindep_{E}^{K^{M}}B$ does not hold for the previous $E,A,B$:

\vspace{10pt}
Let $\langle b_{0}\rangle $ be the line generated by $b_{0}$ seen as an imaginary. Then $A\langle b_{0}\rangle \centernot \forkindep^{K}_{E\langle b_{0}\rangle}B$ since $b_{0}\in acl(A,\langle b_{0}\rangle )$ and $b_{0}\centernot\in acl(E,\langle b_{0}\rangle)$: In fact $b_{0}$ is the only point $x$ of the line $\langle b_{0}\rangle$ that satisfies $[a,x]=1$. 

\subsection{A counter-example for $sT^{ACF,eq}_{\infty}$}

We consider the imaginary expansion $sT^{ACF,eq}_{\infty}$ of $sT^{ACF}_{\infty}$. 
As we mentioned in the previous subsection $sT^{ACF}_{\infty}$ does not weakly eliminate the equivalence classes of the relations `generating the same vector subspace'. We add additional sorts for the equivalence classes of these relations and write $sT^{ACF}_{\infty, Gr}$ for this theory in the extended language.

\vspace{10pt}
The following result is due to Dobrowolski.

\begin{prop}
$sT^{ACF}_{\infty, Gr}$ has weak elimination of imaginaries.
\end{prop}

\begin{proof}
The proof was kindly communicated to me by Dobrowolski (personal communication).
\end{proof}

We now show that $\forkindep^{K^{M}}$ does not satisfy extension in $sT^{ACF,eq}_{\infty}$, which yields that $\forkindep^{K^{M}}\centernot=\forkindep^{f}$.

\vspace{10pt}
Let $a,b_{0}\in \mathbb{V}$ and $\alpha\in \mathbb{K}$ be such that $[b_{0},b_{0}]=1$, $[a,a]=0$, $[a,b_{0}]=\alpha$ and $\alpha$ is transcendental over the prime field.

\vspace{10pt}
It is easy to see that $\alpha, a\forkindep^{K^{M}}_{\emptyset}b_{0}$, in fact by weak elimination of imaginaries all of the elements in $acl^{eq}(b_{0})\setminus acl^{eq}(\emptyset)$ are interalgebraic with $b_{0}$ or $\langle b_{0}\rangle$, and these two elements are interalgebraic since $b_{0}$ is one of the two elements $x \in \langle b_{0}\rangle$ such that $[x,x]=1$.

\vspace{10pt}
Let $b_{1}$ be such that $b_{1}\centernot\in \langle b_{0}\rangle$, $[b_{1},b_{1}]=0$ and $[b_{0},b_{1}]=0$. We show that there is no $\alpha',a'\equiv_{b_{0}}\alpha,a$ such that $\alpha',a'\forkindep^{K^{M}}_{\emptyset}b_{0}b_{1}$. Otherwise let $\alpha',a'$ be such a tuple.

\vspace{10pt}
If $[a',b_{1}]=0$, then $\alpha' a'\centernot\forkindep^{K}_{\langle b_{0}b_{1} \rangle}b_{0}b_{1}$: The set $b_{0}+\langle b_{1}\rangle$ can be seen as the class of $(b_{0},b_{1})$ for the relation $(x,y)E(x',y'):= (\langle y \rangle = \langle y'\rangle)\wedge (x-x'\in \langle y \rangle)$. This set is definable over $a'\alpha'\langle b_{0}b_{1}\rangle$ as $\lbrace x $ : $x\in \langle b_{0}b_{1}\rangle$ and $[x,a']=\alpha'\rbrace$, so as an imaginary it is in the algebraic closure of $a'\alpha'\langle b_{0}b_{1}\rangle$. However $b_{0}+\langle b_{1}\rangle$ is not in the algebraic closure of $\langle b_{0}b_{1}\rangle$ since $b_{0}+\langle b_{1}\rangle\equiv_{\langle b_{0}b_{1}\rangle}(b_{0} +\lambda \cdot b_{1})+\langle b_{1}\rangle$ for all $\lambda \in \mathbb{K}$.

\vspace{10pt}
Otherwise assume that $[a',b_{1}]=\beta' \centernot=0$. We show that $\alpha',a'\centernot\forkindep^{K}_{\langle b_{0}b_{1} \rangle,b_{1}}b_{0}b_{1}$. Then $a'^{\perp}\cap \langle b_{0}b_{1}\rangle = \langle \beta'\cdot b_{0} - \alpha'\cdot b_{1}\rangle$. We can define the set of the points $x$ in this line that satisfy $[x,x]=1$. This set is finite and contains $b_{0}+\frac{\alpha'}{\beta'}\cdot b_{1}$. Since $\alpha',\beta',b_{1}\in acl(\alpha'a'\langle b_{0}b_{1}\rangle b_{1})$ we deduce that $b_{0}\in acl(\alpha'a'\langle b_{0}b_{1}\rangle b_{1})$. However $b_{0}$ is not in the algebraic closure of $\langle b_{0}b_{1} \rangle b_{1}$. In fact $b_{0}\equiv_{\langle b_{0}b_{1}\rangle b_{1}}b_{0} + \lambda \cdot b_{1}$ for all $\lambda\in \mathbb{K}$.

\begin{remark} This theory is to my knowledge the only example of NSOP$_1$ theory with existence that does not satisfy that $\forkindep^{f}=\forkindep^{K^{M}}$. I should also mention that there is not yet a characterization of forking independence in this theory.
\end{remark}

\bibliographystyle{plain}
\bibliography{ref.bib}

\end{document}